\numberwithin{equation}{section}
\newtheorem{theorem}{Theorem}[section]
\newtheorem{corollary}[theorem]{Corollary}
\newtheorem{lemma}[theorem]{Lemma}
\theoremstyle{definition}
\theoremstyle{remark}
\newtheorem{remark}[theorem]{Remark}
\begin{document}
%%%%%%%%%%%%%%%%%%%%%%%%%%%%%%%%%%%%%%%%%%%%%%%%%
%%%%%%%%%%%%  macrodefinitions
%%%%%%%%%%%%%%%%%%%%%%%%%%%%%%%%%%%%%%%%%%%%%%%%%
%  Macros (general)
%%%%%%%%%%%%%%%%%%%%%%%
%\newcommand{\MgNekp}{\mathcal{M}_{g,N+1}^{(k,p)}} %% moduli space
%\newcommand{\M}{\mathcal{M}_{g,N+1}^{(1)}}
\newcommand{\M}{\mathcal{M}}
\newcommand{\F}{\mathcal{F}}

\newcommand{\Teich}{\mathcal{T}_{g,N+1}^{(1)}}
\newcommand{\T}{\mathrm{T}}
%%%%   temporary
\newcommand{\corr}{\bf}
\newcommand{\vac}{|0\rangle}
\newcommand{\Ga}{\Gamma}
\newcommand{\new}{\bf}
\newcommand{\define}{\def}
\newcommand{\redefine}{\def}
\newcommand{\Cal}[1]{\mathcal{#1}}
\renewcommand{\frak}[1]{\mathfrak{{#1}}}
\newcommand{\Hom}{\rm{Hom}\,}
%%%%%%%%%%%%%%%%%%%%%%%%%%%%%%%%%%%%
%   Referencing Scheme of Martin
%%%%%%%%%%%%%%%%%%%%%%%%%%
\newcommand{\refE}[1]{(\ref{E:#1})}
\newcommand{\refCh}[1]{Chapter~\ref{Ch:#1}}
\newcommand{\refS}[1]{Section~\ref{S:#1}}
\newcommand{\refSS}[1]{Section~\ref{SS:#1}}
\newcommand{\refT}[1]{Theorem~\ref{T:#1}}
\newcommand{\refO}[1]{Observation~\ref{O:#1}}
\newcommand{\refR}[1]{Remark~\ref{R:#1}}
\newcommand{\refP}[1]{Proposition~\ref{P:#1}}
\newcommand{\refD}[1]{Definition~\ref{D:#1}}
\newcommand{\refC}[1]{Corollary~\ref{C:#1}}
\newcommand{\refL}[1]{Lemma~\ref{L:#1}}
\newcommand{\refEx}[1]{Example~\ref{Ex:#1}}
\newcommand{\ovl}[1]{\overline{#1}}
\newcommand{\til}[1]{\widetilde{#1}}
\newcommand{\wht}[1]{\widehat{#1}}
%%%%%%%%%%%%%%%%%%%%%%%%%%%%%%%%%%
\newcommand{\R}{\ensuremath{\mathbb{R}}}
\newcommand{\C}{\ensuremath{\mathbb{C}}}
\newcommand{\N}{\ensuremath{\mathbb{N}}}
\newcommand{\Q}{\ensuremath{\mathbb{Q}}}
\renewcommand{\P}{\ensuremath{\mathcal{P}}}
\newcommand{\Z}{\ensuremath{\mathbb{Z}}}
%%%%%%%%%%%%%%%%%%%%%%%%%%%%%%%%%%%%%%%%%%
\newcommand{\kv}{{k^{\vee}}}
%%%%%%%%%%%%%%%%%%%%%%%%%%%%%%%%%%%%%%%%%%%%%
\renewcommand{\l}{\lambda}
%%%%%%%%%%%%%%%%%%%%%%%%%%%%%%%%%%%%%%%%%%%%%%%%%%
\newcommand{\gb}{\overline{\mathfrak{g}}}
\newcommand{\dt}{\tilde d}     % Oleg
\newcommand{\hb}{\overline{\mathfrak{h}}}
\newcommand{\g}{\mathfrak{g}}
\newcommand{\h}{\mathfrak{h}}
\newcommand{\gh}{\widehat{\mathfrak{g}}}
\newcommand{\ghN}{\widehat{\mathfrak{g}_{(N)}}}
\newcommand{\gbN}{\overline{\mathfrak{g}_{(N)}}}
\newcommand{\tr}{\mathrm{tr}}
\newcommand{\gln}{\mathfrak{gl}(n)}
\newcommand{\son}{\mathfrak{so}(n)}
\newcommand{\spnn}{\mathfrak{sp}(2n)}
\newcommand{\sln}{\mathfrak{sl}}
\newcommand{\sn}{\mathfrak{s}}
\newcommand{\so}{\mathfrak{so}}
\newcommand{\spn}{\mathfrak{sp}}
\newcommand{\tsp}{\mathfrak{tsp}(2n)}
\newcommand{\gl}{\mathfrak{gl}}
\newcommand{\slnb}{{\overline{\mathfrak{sl}}}}
\newcommand{\snb}{{\overline{\mathfrak{s}}}}
\newcommand{\sob}{{\overline{\mathfrak{so}}}}
\newcommand{\spnb}{{\overline{\mathfrak{sp}}}}
\newcommand{\glb}{{\overline{\mathfrak{gl}}}}
\newcommand{\Hwft}{\mathcal{H}_{F,\tau}}
\newcommand{\Hwftm}{\mathcal{H}_{F,\tau}^{(m)}}

%%%%%%%%%%%%%%%%%%%%%%%%%%%%%%%%%%%%%%%%%%%%%%%%%%%%
\newcommand{\car}{{\mathfrak{h}}}    % Cartan subalgebra
\newcommand{\bor}{{\mathfrak{b}}}    % Borel subalgebra
\newcommand{\nil}{{\mathfrak{n}}}    % nilpotent subalgebra
\newcommand{\vp}{{\varphi}}
\newcommand{\bh}{\widehat{\mathfrak{b}}}  % Borel subalgebra of KN algebra
\newcommand{\bb}{\overline{\mathfrak{b}}}  % Borel subalgebra of KN algebra
\newcommand{\Vh}{\widehat{\mathcal V}}
\newcommand{\KZ}{Kniz\-hnik-Zamo\-lod\-chi\-kov}
\newcommand{\TUY}{Tsuchia, Ueno  and Yamada}
\newcommand{\KN} {Kri\-che\-ver-Novi\-kov}
\newcommand{\pN}{\ensuremath{(P_1,P_2,\ldots,P_N)}}
\newcommand{\xN}{\ensuremath{(\xi_1,\xi_2,\ldots,\xi_N)}}
\newcommand{\lN}{\ensuremath{(\lambda_1,\lambda_2,\ldots,\lambda_N)}}
\newcommand{\iN}{\ensuremath{1,\ldots, N}}
\newcommand{\iNf}{\ensuremath{1,\ldots, N,\infty}}

\newcommand{\tb}{\tilde \beta}
\newcommand{\tk}{\tilde \varkappa}
\newcommand{\ka}{\kappa}
\renewcommand{\k}{\varkappa}
\newcommand{\ce}{{c}}

\newcommand{\Pif} {P_{\infty}}
\newcommand{\Pinf} {P_{\infty}}
\newcommand{\PN}{\ensuremath{\{P_1,P_2,\ldots,P_N\}}}
\newcommand{\PNi}{\ensuremath{\{P_1,P_2,\ldots,P_N,P_\infty\}}}
\newcommand{\Fln}[1][n]{F_{#1}^\lambda}
\newcommand{\tang}{\mathrm{T}}
\newcommand{\Kl}[1][\lambda]{\can^{#1}}
\newcommand{\A}{\mathcal{A}}
\newcommand{\U}{\mathcal{U}}
\newcommand{\V}{\mathcal{V}}
\newcommand{\W}{\mathcal{W}}
\renewcommand{\O}{\mathcal{O}}
\newcommand{\Ae}{\widehat{\mathcal{A}}}
\newcommand{\Ah}{\widehat{\mathcal{A}}}
\newcommand{\La}{\mathcal{L}}
\newcommand{\Le}{\widehat{\mathcal{L}}}
\newcommand{\Lh}{\widehat{\mathcal{L}}}
\newcommand{\eh}{\widehat{e}}
\newcommand{\Da}{\mathcal{D}}
\newcommand{\kndual}[2]{\langle #1,#2\rangle}
\newcommand{\cins}{\frac 1{2\pi\mathrm{i}}\int_{C_S}}
\newcommand{\cinsl}{\frac 1{24\pi\mathrm{i}}\int_{C_S}}
\newcommand{\cinc}[1]{\frac 1{2\pi\mathrm{i}}\int_{#1}}
\newcommand{\cintl}[1]{\frac 1{24\pi\mathrm{i}}\int_{#1 }}
\newcommand{\w}{\omega}
\newcommand{\ord}{\operatorname{ord}}
\newcommand{\res}{\operatorname{res}}
\newcommand{\nord}[1]{:\mkern-5mu{#1}\mkern-5mu:}
\newcommand{\codim}{\operatorname{codim}}
\newcommand{\ad}{\operatorname{ad}}
\newcommand{\Ad}{\operatorname{Ad}}
\newcommand{\supp}{\operatorname{supp}}

%%%%%%%%%%%%%%%%%%%%%%%%%%%%%%%%%%%%%%%%%%%%%%%%
\newcommand{\Fn}[1][\lambda]{\mathcal{F}^{#1}}
\newcommand{\Fl}[1][\lambda]{\mathcal{F}^{#1}}
\renewcommand{\Re}{\mathrm{Re}\,}
\renewcommand{\Im}{\mathrm{Im}\,}

\newcommand{\ha}{H^\alpha}

\define\ldot{\hskip 1pt.\hskip 1pt}
\define\ifft{\qquad\text{if and only if}\qquad}
\define\a{\alpha}
\redefine\d{\delta}
\define\w{\omega}
\define\ep{\epsilon}
\redefine\b{\beta} \redefine\t{\tau}
\redefine\i{{\mathrm{i}}}
\define\ga{\gamma}
\define\cint #1{\frac 1{2\pi\i}\int_{C_{#1}}}
\define\cintta{\frac 1{2\pi\i}\int_{C_{\tau}}}
\define\cintt{\frac 1{2\pi\i}\oint_{C}}
\define\cinttp{\frac 1{2\pi\i}\int_{C_{\tau'}}}
\define\cinto{\frac 1{2\pi\i}\int_{C_{0}}}
%\define\cinttt{\frac 1{24\pi\i}\int_{C_{\tau}}}
\define\cinttt{\frac 1{24\pi\i}\int_C}
\define\cintd{\frac 1{(2\pi \i)^2}\iint\limits_{C_{\tau}\,C_{\tau'}}}
\define\dintd{\frac 1{(2\pi \i)^2}\iint\limits_{C\,C'}}
\define\cintdr{\frac 1{(2\pi \i)^3}\int_{C_{\tau}}\int_{C_{\tau'}}
\int_{C_{\tau''}}}
\define\im{\operatorname{Im}}
\define\re{\operatorname{Re}}
%\define\res{\text{res}}
\define\res{\operatorname{res}}
\redefine\deg{\operatornamewithlimits{deg}}
\define\ord{\operatorname{ord}}
\define\rank{\operatorname{rank}}
\define\fpz{\frac {d }{dz}}
\define\dzl{\,{dz}^\l}
\define\pfz#1{\frac {d#1}{dz}}

\define\K{\Cal K}
\define\U{\Cal U}
\redefine\O{\Cal O}
\define\He{\text{\rm H}^1}
\redefine\H{{\mathrm{H}}}
\define\Ho{\text{\rm H}^0}
\define\A{\Cal A}
\define\Do{\Cal D^{1}}
\define\Dh{\widehat{\mathcal{D}}^{1}}
\redefine\L{\Cal L}
\newcommand{\ND}{\ensuremath{\mathcal{N}^D}}
\redefine\D{\Cal D^{1}}
\define\KN {Kri\-che\-ver-Novi\-kov}
\define\Pif {{P_{\infty}}}
\define\Uif {{U_{\infty}}}
\define\Uifs {{U_{\infty}^*}}
\define\KM {Kac-Moody}
\define\Fln{\Cal F^\lambda_n}
%%%%%%%%%%%%%%%%%%%%
\define\gb{\overline{\mathfrak{ g}}}
\define\G{\overline{\mathfrak{ g}}}
\define\Gb{\overline{\mathfrak{ g}}}
\redefine\g{\mathfrak{ g}}
\define\Gh{\widehat{\mathfrak{ g}}}
\define\gh{\widehat{\mathfrak{ g}}}
%%%%%%%%%%%%%%%%%%%%%%%%%%
\define\Ah{\widehat{\Cal A}}
\define\Lh{\widehat{\Cal L}}
\define\Ugh{\Cal U(\Gh)}
\define\Xh{\hat X}
\define\Tld{...}
\define\iN{i=1,\ldots,N}
\define\iNi{i=1,\ldots,N,\infty}
\define\pN{p=1,\ldots,N}
\define\pNi{p=1,\ldots,N,\infty}
\define\de{\delta}

\define\kndual#1#2{\langle #1,#2\rangle}
\define \nord #1{:\mkern-5mu{#1}\mkern-5mu:}
%\define \MgN{{\Cal M}_{g,N}} %% moduli space
%\define \MgNp{{\Cal M}_{g,N}^{(p)}} %% moduli space
\newcommand{\MgN}{\mathcal{M}_{g,N}} %% moduli space
\newcommand{\MgNeki}{\mathcal{M}_{g,N+1}^{(k,\infty)}} %% moduli space
\newcommand{\MgNeei}{\mathcal{M}_{g,N+1}^{(1,\infty)}} %% moduli space
\newcommand{\MgNekp}{\mathcal{M}_{g,N+1}^{(k,p)}} %% moduli space
\newcommand{\MgNkp}{\mathcal{M}_{g,N}^{(k,p)}} %% moduli space
\newcommand{\MgNk}{\mathcal{M}_{g,N}^{(k)}} %% moduli space
\newcommand{\MgNekpp}{\mathcal{M}_{g,N+1}^{(k,p')}} %% moduli space
\newcommand{\MgNekkpp}{\mathcal{M}_{g,N+1}^{(k',p')}} %% moduli space
\newcommand{\MgNezp}{\mathcal{M}_{g,N+1}^{(0,p)}} %% moduli space
\newcommand{\MgNeep}{\mathcal{M}_{g,N+1}^{(1,p)}} %% moduli space
\newcommand{\MgNeee}{\mathcal{M}_{g,N+1}^{(1,1)}} %% moduli space
\newcommand{\MgNeez}{\mathcal{M}_{g,N+1}^{(1,0)}} %% moduli space
\newcommand{\MgNezz}{\mathcal{M}_{g,N+1}^{(0,0)}} %% moduli space
\newcommand{\MgNi}{\mathcal{M}_{g,N}^{\infty}} %% moduli space
\newcommand{\MgNe}{\mathcal{M}_{g,N+1}} %% moduli space
\newcommand{\MgNep}{\mathcal{M}_{g,N+1}^{(1)}} %% moduli space
\newcommand{\MgNp}{\mathcal{M}_{g,N}^{(1)}} %% moduli space
\newcommand{\Mgep}{\mathcal{M}_{g,1}^{(p)}} %% moduli space
\newcommand{\MegN}{\mathcal{M}_{g,N+1}^{(1)}} %% moduli space

%\define \mpt{(M,P_1,P_2,\ldots, P_N,\Pif)} %% moduli point
%\define \mpp{(M,P_1,P_2,\ldots, P_N)} %% moduli point
%\define \MgNn{{\Cal M}_{g,N}^{(1)}} %% moduli space
%\define \MgNen{{\Cal M}_{g,N+1}^{(1)}} %% moduli space
%\define \Mgo{{\Cal M}_{g,0}} %% moduli space
%\define \mptn{(M,P_1,P_2,\ldots, P_N,\Pif,z_1,\ldots,z_N,z_\infty)}
 %% moduli point
%\define \mppn{(M,P_1,P_2,\ldots, P_N,z_1,\ldots,z_N)} %% moduli point
\define \sinf{{\widehat{\sigma}}_\infty}
\define\Wt{\widetilde{W}}
\define\St{\widetilde{S}}
\newcommand{\SigmaT}{\widetilde{\Sigma}}
\newcommand{\hT}{\widetilde{\frak h}}
\define\Wn{W^{(1)}}
\define\Wtn{\widetilde{W}^{(1)}}
\define\btn{\tilde b^{(1)}}
\define\bt{\tilde b}
\define\bn{b^{(1)}}
\define \ainf{{\frak a}_\infty} %matrices with a finite number of
                                %diagonals

%
%%%%%%%%%% Olegs definitions %%%%%%%%%%%%%%%%%%%%%%%%%%%%%%%%%%%
\define\eps{\varepsilon}    % Oleg
\newcommand{\e}{\varepsilon}
\define\doint{({\frac 1{2\pi\i}})^2\oint\limits _{C_0}
       \oint\limits _{C_0}}                            % Oleg
\define\noint{ {\frac 1{2\pi\i}} \oint}   % Oleg
\define \fh{{\frak h}}     % Oleg
\define \fg{{\frak g}}     % Oleg
\define \GKN{{\Cal G}}   % affine Krichever-Novikov algebra % Oleg
\define \gaff{{\hat\frak g}}   % affine Krichever-Novikov algebra
\define\V{\Cal V}
\define \ms{{\Cal M}_{g,N}} %% moduli space
\define \mse{{\Cal M}_{g,N+1}} %% moduli space
%%%%%%%%%%%%%%%%%%%%%%%%%%%%%%%%%%%%%%
\define \tOmega{\Tilde\Omega}
\define \tw{\Tilde\omega}
\define \hw{\hat\omega}
\define \s{\sigma}
\define \car{{\frak h}}    % Cartan subalgebra
\define \bor{{\frak b}}    % Borel subalgebra
\define \nil{{\frak n}}    % nilpotent subalgebra
\define \vp{{\varphi}}
\define\bh{\widehat{\frak b}}  % Borel subalgebra of KN algebra
\define\bb{\overline{\frak b}}  % Borel subalgebra of KN algebra
\define\KZ{Knizhnik-Zamolodchikov}
\define\ai{{\alpha(i)}}
\define\ak{{\alpha(k)}}
\define\aj{{\alpha(j)}}
\newcommand{\calF}{{\mathcal F}}
\newcommand{\ferm}{{\mathcal F}^{\infty /2}}
\newcommand{\Aut}{\operatorname{Aut}}
\newcommand{\End}{\operatorname{End}}
%%%%%%%%%%%%%%%%%%%%%%%%%%%%%%%%%%%%%%%%%%%
%%%%%%%%%%%%%%%%  цвет %%%%%%%%%%%%%%%%%%%%%%%%%%%%%%%%%
\newcommand{\red}{\color[rgb]{1,0,0}}
\newcommand{\blue}{\color[rgb]{0,0,1}}
\newcommand{\viol}{\color[rgb]{1,0,1}}%%%%%%%%%%%%%%%%%%%%%%%%%%%%%%%%%%%%%%%%%%%%%%

%%%%%%%%%%%%%%%%%%%%%%%%%%%%%%%%%
%%%%%%%%%%%%%%   for laxcent
%%%%%%%%%%%%%%%%%%%%%%%%%%%%%%%%%%
\newcommand{\laxgl}{\overline{\mathfrak{gl}}}
\newcommand{\laxsl}{\overline{\mathfrak{sl}}}
\newcommand{\laxso}{\overline{\mathfrak{so}}}
\newcommand{\laxsp}{\overline{\mathfrak{sp}}}
\newcommand{\laxs}{\overline{\mathfrak{s}}}
\newcommand{\laxg}{\overline{\frak g}}
\newcommand{\bgl}{\laxgl(n)}
%%%%%%%%%%%%%%%%%%%%%%%%
\newcommand{\tX}{\widetilde{X}}
\newcommand{\tY}{\widetilde{Y}}
\newcommand{\tZ}{\widetilde{Z}}
%%%%%%%%%%%%%%%%%%%%%%%%%%%%%%%%%%%%%%%%%%
%%%%%%%%%%%%  END of macrodefinitions
%%%%%%%%%%%%%%%%%%%%%%%%%%%%%%%%%%%%%%%%%

%%%%%%%%%%%%%%%%%%%%%%%%%%%%%%%%%
%Top-Matter
%%%%%%%%%%%%%%%%%%%%%%%%%%%%%%%
%%%%%%%%%%%%%%%%%    private header  %%%%%%%%%%%%%%%%%%%%

%\large{
\title[]{Inversion of the Abel--Prym map for real curves with involutions}
\author[O.K.Sheinman]{O.K.Sheinman}
%\date{\today}
\thanks{..............................................}
\address{Steklov Mathematical Institute of the Russian Academy of Sciences}
\dedicatory{}
\maketitle
\begin{abstract}
Riemann vanishing theorem is a main ingredient of the conventional technique related to the Jacobi inversion problem. In the case of curves with a holomorphic involution, it has been presented quite fully in wellknown Fay's Lectures on theta functions. The case of real algebraic curves with involution is presented with less completeness in the literature. We provide a detailed presentation of that case, including the case of real curves of the non-separating type with a holomorphic involution, not considered before with this relation. In particular, we formulate the symmetry of the Prym theta function in this case.

Bibliography: 17 titles.

Key words: Abel--Prym transformation, Jacobi inversion problem, Prym theta function, real curve.
\end{abstract}
\tableofcontents
%%%%%%%%%%%%%%%%%%%%%%%%%%%%%%%%%%%%%%%%
\section{Introduction}
Let $\Sigma$ be a genus $g$ agebraic curve. It is classical that the Abel map establishes a birational (bimeromorphic on certain open dense subsets) correspondence between ${\rm Sym}^g\Sigma$ and the Jacobian $Jac(\Sigma)$ of $\Sigma$. Also, it can be considered as a correspondence between equivalence classes of degree $g$ divisors and points of $Jac(\Sigma)$. The inverse to the Abel map is given by the following Riemann theorem: the preimage of a point of the Jacobian (except for the points of a subvariety of codimension 1) is given by a zero divisor of a certain auxiliary function (constructed using the Riemann $\theta$-function) on the universal covering of $\Sigma$. The last theorem is referred to as the Riemann vanishing theorem below, and the above reversion procedure as a whole is referred to as the Jacobi inversion. Among other things, the Jacobi inversion is a powerful tool of the theory of integrable systems.

If $\Sigma$ is endowed with a holomorphic involution (denoted by $\s$ below) then $\Sigma$ can be assigned with another Abelian variety called Prym variety (or Prymian), defined as the subset of $\s$-antiinvariant points in $Jac(\Sigma)$, and denoted by $Prym_\s(\Sigma)$ below. However there is a more primary object in relation to the Prymian, namely, its finite unramified covering called isoPrymian below ($isoPrym(\Sigma)$). The construction of $isoPrym(\Sigma)$ repeats the construction of $Jac(\Sigma)$ where the Riemann matrix is replaced with the Prym matrix. In particular, isoPrymian is always principally polarized. The corresponding analog of the Abel map using only $\s$-antiinvariant holomorphic differentials is called the Abel--Prym map (see \refSS{Ab_Pr} for definitions).

As we noted in \cite{Sh_Bin}, the Jacobi inversion problem should be modified if applied to the Abel--Prym map. First, it is shown below that it should be posed on the isoPrymian. Second, the number of points returned by the Riemann vanishing theorem is twice as big as $\dim isoPrym(\Sigma)$ \cite{Fay} but the divisor~$\zeta$ formed by them satisfies the relation $\zeta+\s \zeta\sim D$ where $D$ is a constant divisor. Thus in this case, the variety of divisors in the Riemann theorem is given as an $h$-dimensional subvariety in $\C^{2h}$ where $h=\dim isoPrym(\Sigma)$, wich is less effective than in the classical case,  in which these divisors are free of any relation. A more effective description is also available in the particular case of two commuting involutions \cite{Sh_PrymJacobi}. In the present paper we consider a general case. In particular, we show that a direct calculation of symmetric functions of the points in the divisor is still possible in this case, which is a weakened approach to the Jacobi inversion problem due to Dubrovin \cite{Dubr_theta} (going back to Riemann).

If $\Sigma$ possesses an antiholomorphic involution, it is called a real curve.  Jacobians of real curves have been first investigated in \cite{Dubr_Nat} with relation to real solutions to sin-Gordon equation. It was shown that the Abel map  establishes a 1-to-1 correspondence between $\tau$-invariant degree $g$ divisors and the real part of the Jacobian $Jac(\Sigma)$.  Investigation of Prymians of real curves was pioneered in \cite{VN} with relation to real solutions to the potential two-dimensional Schr\"{o}dinger equation, and also presented in \cite{Nat1} in the case when $\s$ has only two fixed points. A general result of \cite{VN} is that a certain shift of the real part of such Prymian is in a 1-to-1 correspondence with divisors of a certain degree on $\Sigma$ satisfying the relations $\zeta+\s \zeta\sim D$ and $\tau\zeta=\zeta$, and the correspondence is established by the Abel transform (see \refL{inver1}.$1^\circ$-$3^\circ$). Together, \refL{inver1}.$4^\circ$ and \refT{invers} give the inversion theorem for the Abel--Prym map of real curves. It claims that the inverse image of a certain real subvariety of isoPrymian under the Abel--Prym map is given by $\tau$-invariant, or $\s\tau$-invariant divisors $\zeta$ on $\Sigma$ satisfying the relation $\zeta+\s\zeta=D$ where $D$ is a constant divisor. Proof of the theorem is based on the results of \cite{Fay,Sh_PrymJacobi}, and on the study of symmetries of the Prym $\theta$-function (\refL{P_symm} of the present paper) for both separating and non-separating real curves. It generalizes the results of \cite{Dubr_Nat,Dubr_ItNa} on the Riemann $\theta$-function of a real curve, and of \cite{Fay} on the Prym $\theta$-function of a real curve of separating type.

In \refS{real_Jac}, following  \cite{Dubr_Nat,Dubr_ItNa,Nat1}, we give preliminaries on Jacobians and Riemann $\theta$-functions of real curves. In \refS{ogr} we introduce main notions related to Prym varieties, study the realness properties of Prym matrices and symmetries of Prym $\theta$-functions of real curves. In \refS{inv_th} we present our inversion theorem for real curves.

%%%%%%%%%%%%%%%%%%%%%%%%%%%%%%%%%%%%%%%%%%%%%%%

%%%%%%%%%%%%%%%%%%%%%%%%%%%%%%%%%%%%%%%%%%%%%%%
\section{Real curves, their Jacobians and $\theta$-functions}\label{S:real_Jac}
This section contains a background information, and follows the lines of \cite{Dubr_Nat,Nat1,Nat2,Nat3}

%%%%%%%%%%%%%%%%%%%%%%%%%%%%%%%%%%%%%%%%%%%%%%
\subsection{Separating and non-separating curves. Topological type. Real bases}
Let $\Sigma$ be a compact algebraic curve over $\C$. If $\Sigma$ possesses an antiholomorphic involution (antiinvolution for short) then it is referred to as a real curve. Let $\tau$ stay for such involution. Connected components of the set of fixed points of $\tau$ are called ovals. If $\Sigma\setminus\cup\,{ovals}$ is not connected, the pair $(\Sigma,\tau)$ is called a real curve of separating type (separating curve), otherwise it is called a non-separating curve. For separating curves, the number of connected components of $\Sigma\setminus\cup\,{ovals}$ is always equal to 2 \cite{Nat1}.

Two real curves $(\Sigma_1,\tau_1)$ and $(\Sigma_2,\tau_2)$ are called topologically equivalent if there is a homeomorphism $\psi :\Sigma_1\to\Sigma_2$ such that $\psi\tau_1=\tau_2\psi$. Set $\eps=1$ for separating curves, and $\eps=0$ for non-separating curves. Let $g$ stay for the genus of $\Sigma$, and $k$ stay for the number of ovals. The set $(g,k,\eps)$ is called the topological type of the pair $(\Sigma,\tau)$.
\begin{theorem}[\cite{Nat1}]\label{T:ttype}
Two real curves are topologically equivalent iff they have the same topological type.
\end{theorem}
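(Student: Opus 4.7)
The forward direction is immediate: any $\tau$-equivariant homeomorphism $\psi:\Sigma_1\to\Sigma_2$ preserves the genus $g$, restricts to a bijection between the fixed-point sets of $\tau_1$ and $\tau_2$ (hence preserves $k$), and carries $\Sigma_1\setminus\bigcup{\rm ovals}$ onto $\Sigma_2\setminus\bigcup{\rm ovals}$ (hence preserves $\eps$).

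For the converse, my plan is to reduce the equivariant classification of $(\Sigma,\tau)$ to the ordinary topological classification of the quotient $X := \Sigma/\tau$. Because $\tau$ is antiholomorphic, a tubular neighbourhood of each oval has the local model $S^1 \times (-\delta,\delta)$ with $\tau$ acting by $(x,t)\mapsto(x,-t)$, so $X$ is a compact surface whose boundary consists of exactly $k$ circles, and the original curve $\Sigma$ is recovered as the complex double of $X$ (in the sense of Alling--Greenleaf) with $\tau$ corresponding to the canonical deck involution.

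Next I would read off the topological invariants of $X$ from $(g,k,\eps)$. In the separating case ($\eps=1$), $X$ is homeomorphic to either of the two components of $\Sigma\setminus\bigcup{\rm ovals}$: an orientable surface with $k$ boundary circles whose genus, forced by $\chi(\Sigma)=2\chi(X)$, equals $g_0=(g-k+1)/2$. In the non-separating case ($\eps=0$), the action of $\tau$ on the connected surface $\Sigma\setminus\bigcup{\rm ovals}$ is free and orientation-reversing, so $X$ is non-orientable with $k$ boundary circles, and the same Euler-characteristic identity fixes the non-orientable genus to be $h=g+1-k$. By the classical classification of compact surfaces with boundary (orientable or not), $X$ is thus determined up to homeomorphism by $(g,k,\eps)$, yielding a homeomorphism $\bar\psi:X_1\to X_2$.

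The hard part, and the main obstacle, is lifting $\bar\psi$ to a $\tau$-equivariant homeomorphism $\psi:\Sigma_1\to\Sigma_2$ of the complex doubles. This is the functoriality of the double construction: once a basepoint in one sheet is fixed, the lift exists and is unique, and its equivariance $\psi\tau_1=\tau_2\psi$ follows automatically because the two possible lifts are interchanged by $\tau_2$. The one delicate step is continuity across the oval locus, which is handled by first arranging, via a standard isotopy, that $\bar\psi$ carries a collar neighbourhood of $\partial X_1$ homeomorphically onto a collar of $\partial X_2$; this bookkeeping is routine but would constitute the bulk of a fully rigorous write-up.
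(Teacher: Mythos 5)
The paper does not prove this statement: it is the classical topological classification of real curves (Weichold--Klein), quoted from Natanzon's book \cite{Nat1} without argument, so there is no in-paper proof to compare yours against. Your sketch follows the standard modern route via the quotient Klein surface, and it is sound in outline. The forward direction is correct as written. For the converse, the reduction to (i) the classification of compact surfaces with boundary applied to $X=\Sigma/\tau$ and (ii) the uniqueness and functoriality of the complex double is exactly how the theorem is usually established, and your Euler-characteristic bookkeeping ($\chi(\Sigma)=2\chi(X)$, giving orientable genus $(g+1-k)/2$ when $\eps=1$ and non-orientable genus $g+1-k$ when $\eps=0$) is right. The two points you should expand in a full write-up are precisely the ones you flag as delicate, plus one you pass over silently: first, the identification of $(\Sigma,\tau)$ with the double of $X$ \emph{up to equivariant homeomorphism} (uniqueness of the double) is asserted rather than proved, and it is the real content of the theorem; second, in the non-separating case the existence of a lift of $\bar\psi$ over the interiors requires knowing that $\Sigma\setminus\mathrm{Fix}(\tau)\to X\setminus\partial X$ is the orientation double cover, i.e.\ corresponds to the kernel of the orientation character in $\pi_1(X\setminus\partial X)$, which every homeomorphism preserves --- this is why the lifting criterion is satisfied, and it deserves to be said explicitly. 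Your argument for equivariance of the lift (the two lifts differ by the deck transformation $\tau_2$, and $\tau_2\psi\tau_1=\tau_2\psi$ would force $\tau_1=\mathrm{id}$) and the collar argument for continuity across the ovals are both correct.
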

For more information on the structure of real curves we refer to \cite{Nat1}, and the works quoted there.
\begin{theorem}[\cite{Nat1}]\label{T:r_base}
Let $(\Sigma,\tau)$ be a real curve of the type $(g,k,\eps)$, and $q\in\Sigma$ is a real point. Then there exists a symplectic base $\{ a_j,b_j|j=1,\ldots,g\}$ of cycles on $\Sigma$ such that
\begin{itemize}
  \item[$1^\circ$] for $\eps=1$
\[
           \left\{
              \begin{array}{ll}
                  \tau(a_i)=a_i, \tau(b_i)=-b_i, & i=1,\ldots,k-1; \\
                \tau(a_i)=a_{i+m}, \tau(b_i)=-b_{i+m}, & i=k,\ldots,k+m-1
              \end{array}
            \right.
\]
where $m=\frac{1}{2}(g+1-k)$, and the oval containing $q$ is homological to $\sum_{i=1}^{k-1}a_i$;
  \item[$2^\circ$] for $\eps=0$
\[
           \left\{
              \begin{array}{ll}
                  \tau(a_i)=a_i,  & i=1,\ldots,g; \\
                  \tau(b_i)=-b_i, & i=1,\ldots,k-1; \\
                  \tau(b_i)=-b_i+a_i, & i=k,\ldots,g, \\
              \end{array}
            \right.
\]
and the oval containing $q$ is homological to $\sum_{i=1}^{g}a_i$.
\end{itemize}
\end{theorem}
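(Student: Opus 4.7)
The plan is to invoke \refT{ttype} to reduce the assertion to constructing a single adapted symplectic basis for one representative of each topological type $(g,k,\eps)$. Indeed, if $\psi\colon\Sigma_1\to\Sigma_2$ is a homeomorphism with $\psi\tau_1=\tau_2\psi$, then the image under $\psi_*$ of a symplectic basis satisfying the transformation laws $1^\circ$ or $2^\circ$ on $\Sigma_1$ is a symplectic basis satisfying the same laws on $\Sigma_2$ (and the oval condition is preserved because $\psi$ maps ovals to ovals). So it is enough to exhibit, for every admissible triple $(g,k,\eps)$ and any chosen real point $q$, one real curve with an explicit adapted basis.

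\textbf{Separating case ($\eps=1$).} Realize $\Sigma$ as the Schottky double of a compact orientable surface $\Sigma^+$ of genus $m=\tfrac12(g+1-k)$ with $k$ boundary circles $o_1,\dots,o_k$, the antiinvolution $\tau$ swapping $\Sigma^+$ and $\tau(\Sigma^+)$ and fixing each $o_j$ pointwise; place $q$ on $o_k$. Take $a_i:=o_i$ for $i=1,\dots,k-1$ (so $\tau(a_i)=a_i$), and for $b_i$ take a loop based at $q$ running inside $\Sigma^+$ from $o_k$ to $o_i$ and back through $\tau(\Sigma^+)$; with the canonical orientation this yields $\tau(b_i)=-b_i$. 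Inside $\Sigma^+$ choose a standard symplectic basis $a_{k},\dots,a_{k+m-1},b_{k},\dots,b_{k+m-1}$ for the genus-$m$ handles, and define $a_{k+m+j}:=\tau(a_{k+j})$, $b_{k+m+j}:=-\tau(b_{k+j})$ for $j=0,\dots,m-1$. That these $2g$ cycles form a symplectic basis follows from the standard intersection pairing (the $a$-cycles on $\Sigma^+$ do not meet their $\tau$-images, and the pairing on each handle is preserved by $\tau$ up to the sign encoded in the choice $b\mapsto -\tau(b)$). Finally, since $o_1+\dots+o_k$ bounds $\Sigma^+$ in $\Sigma$ and so is null-homologous, $o_k\sim-(a_1+\dots+a_{k-1})$; with the appropriate orientation of $o_k$ this is the desired relation.

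\textbf{Non-separating case ($\eps=0$).} Here $\Sigma/\tau$ is a non-orientable surface with $k$ boundary circles, and $\Sigma\to\Sigma/\tau$ is unramified away from the ovals. Choose a model whose ovals are $k$ disjoint real circles none of which separates $\Sigma$; the complement has $g-k+1$ crosscap handles downstairs. Take $a_1,\dots,a_k$ to be the $k$ ovals (each $\tau$-invariant), extend to $a_{k+1},\dots,a_g$ by pulling back the $a$-cycles of the non-orientable quotient so that $\tau(a_i)=a_i$, and construct $b_i$ for $i<k$ as arcs joining distinct ovals just as in the separating case. For $i\geq k$, the associated handle does not decompose under $\tau$, so a cycle $b_i$ dual to $a_i$ must cross the oval $a_i$ once; a local model (a Möbius-band neighbourhood of $a_i$) shows that continuing $b_i$ across $a_i$ and applying $\tau$ gives $\tau(b_i)=-b_i+a_i$. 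The oval $a_i$ through $q$ is homologous to $a_1+\dots+a_g$ because in $H_1(\Sigma,\Z)$ the sum of the ovals together with the odd $a_i$'s is the dual of the orientation-reversing class of the quotient; a direct check on the standard picture gives the stated relation.

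\textbf{Main obstacle.} The separating case is essentially bookkeeping on a doubled surface. The subtle step is the non-separating case: one must pick $a_i$, $b_i$ for $i\geq k$ so that $a_i$ is a non-separating $\tau$-fixed cycle (an oval or its deformation) and $b_i$ crosses exactly one such $a_i$, and then check that the induced action on $H_1(\Sigma,\Z)$ is $b_i\mapsto -b_i+a_i$ rather than $-b_i$. This amounts to identifying the correct model of $\Sigma$ as a branched cover of a Klein surface and choosing compatible orientations on all cycles simultaneously; it is where the real work lies.
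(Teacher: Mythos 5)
First, note that the paper offers no proof of this statement at all: it is quoted verbatim from Natanzon \cite{Nat1} as background, so your attempt can only be measured against the standard topological construction (Weichold/Natanzon), not against an argument in the text. Your overall strategy --- reduce via \refT{ttype} to a single model of each type, then exhibit an adapted basis on the Schottky double (separating case) and on the orientation double cover of a non-orientable bordered surface (non-separating case) --- is the right one, and the separating case is essentially complete modulo routine checks of intersection numbers. One caveat on the reduction: in \cite{Nat1} the classification \refT{ttype} and the normal form \refT{r_base} are proved together, the former essentially by producing the latter, so if you invoke \refT{ttype} you should at least remark that Weichold's classification has an independent topological proof; otherwise the reduction is circular.

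The genuine gap is in the non-separating case, which is the half the paper actually relies on (Lemmas \ref{L:8.2}, \ref{L:symm}, \ref{L:8.2_Prym}, \ref{L:P_symm} all hinge on the relation $\tau(b_i)=-b_i+a_i$). Concretely: (i) your count is off by one --- the quotient has $g-k+1$ crosscaps but you allot only $g-k$ cycles $a_{k+1},\dots,a_g$ to them, while declaring all $k$ ovals to be $a$-cycles; the statement requires $g-k+1$ indices $i\ge k$ with $\tau(b_i)=-b_i+a_i$, and with your indexing $a_k$ is an oval, whose dual $b_k$ your Möbius-band argument does not cover. (ii) With all $k$ ovals taken as $a$-cycles, a cycle $b_i$ ``joining distinct ovals'' meets two of the $a_j$, so $a_j\cdot b_i=\delta_{ij}$ fails; in the separating case you avoided this precisely by excluding the oval through $q$ from the basis, and the same care is needed here. (iii) The two assertions that carry all the content --- that the local model near a $\tau$-invariant non-oval cycle forces $\tau(b_i)=-b_i+a_i$ rather than $-b_i$, and that the oval through $q$ is homologous to $\sum_{i=1}^g a_i$ --- are stated but not derived; you yourself flag them as ``where the real work lies.'' As it stands, case $2^\circ$ is a plan rather than a proof, and since the sign $+a_i$ is exactly what produces the shift $\lambda$ in \refE{lambda} and \refE{lambda_Prym}, this is not a detail one can wave at.
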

A base satisfying to the conditions of \refT{r_base} is called a real base.
%%%%%%%%%%%%%%%%%%%%%%%%%%%%%%%%%%%%%%%%%%%%%%
\subsection{Realness properties of the Riemann matrix of a real curve}\label{SS:realJac}
Let $\{ a_j,b_j\}$ be a real base of cycles of a curve, $\{\w_i\}$ be the normalized base of  holomorphic differentials where the normalization conditions are of the form
\begin{equation}\label{E:norm}
  \int_{a_j}\w_i=2\pi {\rm i}\d_{ij}.
\end{equation}
Define the permutation $t$  of indices $1,\ldots,g$, such that $\tau(a_j)=a_{t(j)}$ by virtue of \refT{r_base}. For separating curves $t$ writes as follows via cyclic permutations: $t=(1)\ldots(k-1)(k,k+m)\ldots(k+m-1,g)$.
For non-separating curves $t$ is trivial. Observe that $t^2=1$.

Let $\w=(\w_1,\ldots,\w_g)^T$ be the column of normalized differentials, $t\w=(\w_{t(1)},\ldots,\w_{t(g)})^T$, $A_i=\int_{a_i}\w$, $B_i=\int_{b_i}\w$ be the corresponding periods. Then $(A_1,\ldots,A_g)=2\pi{\rm i}E$, $(B_1,\ldots,B_g)=B$, where $E$ is the unit matrix, $B$ is referred to as the matrix of periods. It is a symmetric matrix with negative defined real part.

Below we describe specific properties of the period matrix, and symmetries of the Riemann $\theta$-function for real curves.

\begin{lemma}\label{L:tau^*}
\begin{itemize}
 \item[$1^\circ$.]
$\tau^*\w=-t\ovl{\w}$;
  \item[$2^\circ$.]
For separating real curves $\tau^*\w_i=\left\{
                                  \begin{array}{ll}
                                    -\ovl{\w_i}, & \hbox{$i<k$;} \\
                                    -\ovl{\w_{i+m}}, & \hbox{$k\le i<k+m$.}
                                         \end{array}
                                       \right.
                          $
  \item[$3^\circ$.]

For non-separating curves $\tau^*\w = -\overline{\w}$.
\end{itemize}
\end{lemma}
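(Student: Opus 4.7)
The plan is to derive $2^\circ$ and $3^\circ$ as immediate specializations of $1^\circ$, so the core task is to prove $1^\circ$. Since $\tau$ is antiholomorphic (locally given by $z\mapsto\bar z$ in a compatible coordinate), the pullback $\tau^*\omega_i$ is an antiholomorphic $1$-form, so $\overline{\tau^*\omega_i}$ is a holomorphic differential on $\Sigma$. Because the normalized basis $\{\omega_1,\ldots,\omega_g\}$ is uniquely determined by its $a$-periods via \refE{norm}, it is enough to match $a$-periods on both sides of the claimed identity.

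First I would compute, for each $i,j$,
\[
\int_{a_j}\tau^*\omega_i \;=\; \int_{\tau\circ a_j}\omega_i \;=\; \int_{a_{t(j)}}\omega_i \;=\; 2\pi\mathrm{i}\,\delta_{i,t(j)},
\]
using the standard change-of-variables identity for pullbacks, the real-base relation $\tau(a_j)=a_{t(j)}$ in homology from \refT{r_base}, and the normalization \refE{norm}. Because $t$ is an involution, $\delta_{i,t(j)}=\delta_{t(i),j}$, so taking complex conjugates yields
\[
\int_{a_j}\overline{\tau^*\omega_i} \;=\; -2\pi\mathrm{i}\,\delta_{t(i),j} \;=\; \int_{a_j}\bigl(-\omega_{t(i)}\bigr).
\]
Hence $\overline{\tau^*\omega_i}+\omega_{t(i)}$ is a holomorphic $1$-form all of whose $a$-periods vanish, and therefore it is identically zero. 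Rearranging gives $\tau^*\omega_i=-\overline{\omega_{t(i)}}$ for each $i$, which in column-vector form is precisely $1^\circ$.

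Statements $2^\circ$ and $3^\circ$ then reduce to substituting the explicit permutation $t$ recorded right after \refT{r_base}. In the separating case $t$ fixes $1,\ldots,k-1$ and swaps $i\leftrightarrow i+m$ for $k\le i<k+m$, and inserting this into $\tau^*\omega_i=-\overline{\omega_{t(i)}}$ reproduces the two-line formula of $2^\circ$ verbatim. In the non-separating case $t$ is the identity, so $1^\circ$ collapses to $\tau^*\omega=-\overline{\omega}$, which is $3^\circ$.

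I do not expect a substantive obstacle. The only subtle point is orientation: $\tau$ reverses orientation on $\Sigma$, but this effect is already absorbed into the signed homological identity $\tau(a_j)=a_{t(j)}$ given by \refT{r_base}, so no extra sign is to be introduced. The proof is essentially a period-matching argument layered on top of the choice of a real symplectic basis.
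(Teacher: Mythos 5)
Your proposal is correct and follows essentially the same route as the paper: change of variables to compute the $a$-periods of $\tau^*\omega_i$, the involutive property of $t$ to move the permutation from the cycle index to the differential index, the fact that the $a$-periods are purely imaginary, and uniqueness of a (anti)holomorphic differential with prescribed $a$-periods; the only cosmetic difference is that you conjugate and compare holomorphic forms, while the paper compares the two antiholomorphic forms directly. The deduction of $2^\circ$ and $3^\circ$ by specializing $t$ matches the paper as well.
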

\begin{proof}
By change of variables $\int_{a_j}\tau^*\w_i = \int_{a_{t(j)}}\w_i$. By symmetry of the $a$-period matrix (it is just diagonal) $\int_{a_{t(j)}}\w_i = \int_{a_j}\w_{t(i)}$. Besides, the matrix of $a$-periods is imaginary, hence
$\int_{a_j}\w_{t(i)} = -\int_{a_j}\ovl{\w_{t(i)}}$. Both $\tau^*\w_i$ and $-\ovl{\w_{t(i)}}$ are antiholomorphic differentials, and they have the same $a$-periods. Hence $\tau^*\w_i = -\ovl{\w_{t(i)}}$, $i=1,\ldots,g$, and $1^\circ$ is proven. $2^\circ$ and $3^\circ$ immediately follow from $1^\circ$ by definition of the permutation~$t$.
\end{proof}
\begin{lemma}\label{L:8.2}
Let $\{ a_j,b_j\}$ be a real base of cycles of a real curve of the type $(g,k,\varepsilon)$, then
\begin{itemize}
\item[$1^\circ$.]
$\overline{B_j}=B_j$ for $j\le k-1$;
\item[$2^\circ$.]
$\overline{B_j}=B_j-A_j$ for $\varepsilon=0$, $j= k,\ldots,g$;
\item[$3^\circ$.]
$\overline{B_j}=tB_{j+m}$ for $\varepsilon=1$, $j= k,\ldots,k+m-1$, where $m=\frac{1}{2}(g+1-k)$.
\end{itemize}
\end{lemma}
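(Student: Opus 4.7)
The strategy is a direct bookkeeping argument: express $\overline{B_j}$ as $\int_{b_j}\overline{\omega}$, use \refL{tau^*} to trade the conjugate differentials for $\tau$-pullbacks, convert the result to an integral over $\tau(b_j)$ by the standard pullback identity for an antiholomorphic map, and then read off each case by substituting the explicit form of $\tau(b_j)$ supplied by \refT{r_base}.

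Concretely, the first step is to rewrite part $1^\circ$ of \refL{tau^*} in the inverted form $\overline{\omega_i}=-\tau^*\omega_{t(i)}$ (using that $t$ is its own inverse, since the permutations appearing in \refT{r_base} are products of disjoint transpositions and fixed points). Combining this with the identity
\[
\int_\gamma \tau^*\omega_l \;=\; \int_{\tau(\gamma)}\omega_l,
\]
valid for the antiholomorphic involution $\tau$ and any $1$-cycle $\gamma$, produces the single master formula
\[
\overline{(B_j)_i} \;=\; \int_{b_j}\overline{\omega_i} \;=\; -\int_{\tau(b_j)}\omega_{t(i)}.
\]

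The three assertions then follow by plugging in the relevant description of $\tau(b_j)$. For $j\le k-1$ in either topological type, \refT{r_base} gives $\tau(b_j)=-b_j$, which reduces the master formula to $\overline{(B_j)_i}=B_{t(i),j}$; when $\varepsilon=0$ the permutation $t$ is trivial so this is literally $\overline{B_j}=B_j$, while for $\varepsilon=1$ this is the same identity up to the permutation of the components by $t$. For $\varepsilon=0$ and $j\in[k,g]$, one has $\tau(b_j)=-b_j+a_j$, and the normalization $\int_{a_l}\omega_i=2\pi\mathrm{i}\delta_{il}$ produces exactly the correction $-A_j$ appearing in part $2^\circ$. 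For $\varepsilon=1$ and $j\in[k,k+m-1]$, \refT{r_base} gives $\tau(b_j)=-b_{j+m}$, so the master formula becomes $\overline{(B_j)_i}=B_{t(i),j+m}=(tB_{j+m})_i$, which is part $3^\circ$.

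There is no real obstacle here beyond being careful about whether the permutation $t$ acts on the row (differential) index or the column (cycle) index. Once the two roles are kept straight and the symmetry $B_{ij}=B_{ji}$ of the period matrix is used where needed, each of the three statements is a single substitution from the master formula.
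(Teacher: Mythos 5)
Your proof is correct and follows essentially the same route as the paper: both invert Lemma \ref{L:tau^*} to write $\overline{\w}=-t\,\tau^*\w$, change variables to an integral over $\tau(b_j)$, and substitute the explicit action of $\tau$ on the $b$-cycles from Theorem \ref{T:r_base} (the paper packages this as the single relation $\overline{B_j}=tB_{t(j)}$ plus the separate $\varepsilon=0$, $j\ge k$ computation, which is exactly your master formula read column-wise). Your parenthetical observation that for $\varepsilon=1$, $j\le k-1$ the conclusion is really $\overline{B_j}=tB_j$ componentwise is consistent with what the paper's own argument actually yields.
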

\begin{remark}
For separating curves, symmetries of $B$ given by the cases $1^\circ$, $3^\circ$ of the lemma are the same as claimed in \cite[p.109]{Fay}. They are also formulated in Lemma 8.2 \cite{Nat1}, however without mentioning the permutation $t$ in the third relation. For hyperelliptic non-separating curves, the lemma was first formulated in \cite{Dubr_Nat}, and then for general non-separating curves as Lemma 8.2 \cite{Nat1}.
\end{remark}
\begin{proof}[Proof of \refL{8.2}]
The proof in the cases $1^\circ$ and $3^\circ$ immediately follows from the relation $\ovl{B_j}=tB_{t(j)}$. Let's check the last: $\ovl{B_j}= \ovl{\int_{b_j}\w}  = \int_{b_j}\ovl{\w} = -t\int_{b_j}\tau^*{\w} = -t\int_{\tau(b_j)}\w = -t\int_{-b_{t(j)}}\w = tB_{t(j)}$.

In the case $2^\circ$ $t$ is trivial. Similar to the cases $1^\circ$ and $3^\circ$ we get $\ovl{B_j}= -\int_{\tau(b_j)}\w$. We proceed as follows: $\int_{\tau(b_j)}\w = \int_{-b_j+a_j}\w = -B_j+A_j$.
\end{proof}
%%%%%%%%% вставка с накрытием - начало
%The involution $\tau$ can be lifted as $\widetilde{\tau}$ to the universal covering $\widetilde{\Sigma}$ of $\Sigma$. Indeed, we regard $\widetilde{\Sigma}$ as the space of homotopy classes of paths on $\Sigma$ with a fixed beginning $q$ and given ends, and then operate on paths by $\tau$ pointwise. We continue $\tau$ ($\widetilde{\tau}$, resp.) onto ${\rm Sym}^g\Sigma$ (${\rm Sym}^g\widetilde{\Sigma}$, resp.).
%\begin{equation}\label{E:diagr}
%\xymatrix{ \C^g \ar[r]^{-\tau_\R}   & \C^g  \\
%           {\rm Sym}^g\widetilde{\Sigma}\ar[u]^i\ar[r]^{\widetilde{\tau}} \ar[d]_{pr}& {\rm Sym}^g\widetilde{\Sigma}\ar[u]_i \ar[d]_{pr}   \\
%{\rm Sym}^g\Sigma \ar[r]^\tau & {\rm Sym}^g\Sigma
%}
%\end{equation}
%Next, we embed ${\rm Sym}^g\widetilde{\Sigma}$ in $\C^g$ by assigning every unordered set $\{(\ga_1,P_1),\ldots,(\ga_g,P_g)\}$ with $z=\sum_{j=1}^g\int_{\ga_j}\w\in\C^g$ where the path $\ga_j$ connects $q$ with~$P_j\in\Sigma$ (the injection $i$ at the diagram \refE{diagr}).
%%%%%%%%% вставка с накрытием - конец
Using the Abel transform we transfer the antiinvolution $\tau$ to $Jac(\Sigma)$. Indeed, by the Riemann vanishing theorem we uniquely represent almost every $z\in Jac(\Sigma)$ as $z=\int^{D}\w$ where $D$ is a degree $g$ divisor on $\Sigma$. Then we set by definition $\tau(z)=\int^{D}\tau^*\w$. To prove that $\tau(z)$ is well-defined, we check that the period lattice is $\tau$-invariant:
\begin{equation}\label{E:tau-inv}
  \tau(A_j)=A_{t(j)},\quad \tau(B_j)=-B_{t(j)}
\end{equation}
where the second is true modulo $a$-periods.
Indeed, $\tau(A_j)=\int_{a_j}\tau^*\w = -t\int_{a_j}\ovl{\w} = -t\ovl{A_j} = tA_j=A_{t(j)}$ (here we used imaginary and symmetry of the matrix of $a$-periods). Similarly, $\tau(B_j)=\int_{b_j}\tau^*\w = -t\ovl{B_j} = -t(tB_{t(j)}) = -B_{t(j)}$ (since \refL{8.2} implies that $\ovl{B_j}\equiv tB_{t(j)}\, ({\rm mod}A_j)$).

Observe that
\begin{equation}\label{E:tau_coord}
    \tau(z)=-t\ovl{z}, \quad z\in Jac(\Sigma).
\end{equation}
Indeed, $\tau(z) = \int^D\tau^*\w = -t\ovl{\int^D\w} = -t\ovl{z}$.

Next, we define an $\R$-linear involution $\til{\tau}:\,\C^g\to\C^g$ by
\begin{equation}\label{E:tau_R}
            \til{\tau}(z)=-t\ovl{z}, \quad z\in \C^g.
\end{equation}
Hence
\begin{equation}\label{E:tau_R_sep}
  \til{\tau} z=-t\ovl{z}=-(\overline{z_1},\ldots,\overline{z_{k-1}}, \overline{z_{k+m}},\ldots,\overline{z_g}, \overline{z_k},\ldots,\overline{z_{k+m-1}})^T.
\end{equation}
for separating curves, and
\begin{equation}\label{E:tau_R_non}
  \til{\tau} z=-\ovl{z}.
\end{equation}
for non-separating curves.
\begin{lemma}\label{L:til_tau}
The action of $\til{\tau}$ on the periods can be written in the following two equivalent forms:
\[
%\left\{
  \begin{array}{lll}
1^\circ . &
   \til{\tau} A_j=A_{t(j)},\quad \til{\tau} B_j=-B_{t(j)}, & \varepsilon=1,\ j=1,\ldots,g\ \text{or}\ \varepsilon=0,\ j=1,\ldots,k-1; \\
   & \til{\tau} A_j=A_j,\quad \til{\tau} B_j=-B_j+A_j, & \varepsilon=0,\ j=k,\ldots,g;  \\
2^\circ . & \til{\tau}{A_j}=A_j, \til{\tau}{B_j}=-B_j, & j=1,\ldots, k-1; \\
  & \til{\tau}{A_j}=A_j, \til{\tau}{B_j}=-B_j+A_j, & \varepsilon=0,\ j=k,\ldots,g;   \\
    & \til{\tau}{A_j}=A_{j+m}, \til{\tau}{B_j}=-B_{j+m}, & \hbox{$\varepsilon=1$, $j= k,\ldots,k+m-1$;} \\
    & \til{\tau}{A_j}=A_{j-m}, \til{\tau}{B_j}=-B_{j-m}, & \hbox{$\varepsilon=1$, $j= k+m,\ldots,g$}
  \end{array}
%\right.
\]
where $m=\frac{1}{2}(g+1-k)$.

\hskip-3pt
$3^\circ$. The following diagram is commutative:
\[
\xymatrix{
           \C^g\ar[r]^{\til{\tau}} \ar[d]_{pr}& \C^g \ar[d]_{pr}   \\
  Jac(\Sigma) \ar[r]^\tau & Jac(\Sigma)
}
\]
where $pr$ is the natural projection.
\end{lemma}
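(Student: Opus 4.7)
The plan is to verify parts $1^\circ$ and $2^\circ$ (which are equivalent restatements of each other, obtained by resolving the action of $t$ according to topological type) by unwinding the definition $\til\tau(z) = -t\ovl{z}$ on the generators $A_j, B_j$ of the period lattice, and then to derive part $3^\circ$ essentially for free.

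For the $A$-periods, since $A_j = 2\pi\i\, e_j$ is purely imaginary we have $\ovl{A_j} = -A_j$; moreover $t$ acts on $\C^g$ by permutation of components so that $t\,e_j = e_{t(j)}$ (using $t^2 = \mathrm{id}$). Hence
\[
\til\tau A_j \;=\; -t\ovl{A_j} \;=\; tA_j \;=\; A_{t(j)},
\]
which, after substituting the cycle structure of $t$ recorded in \refSS{realJac}, yields every $A$-period formula in both parts $1^\circ$ and $2^\circ$ (namely $A_j$ itself when $t(j)=j$, and $A_{j\pm m}$ in the two nontrivial orbit ranges of the separating case).

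For the $B$-periods I would use $\til\tau B_j = -t\ovl{B_j}$ and apply \refL{8.2} case by case. In the separating cases $j = k, \ldots, g$ one has $\ovl{B_j} = tB_{t(j)}$ with $t(j) = j \pm m$, so
\[
\til\tau B_j \;=\; -t(tB_{t(j)}) \;=\; -t^2 B_{t(j)} \;=\; -B_{t(j)}.
\]
In the non-separating case $j \ge k$, \refL{8.2}($2^\circ$) gives $\ovl{B_j} = B_j - A_j$ while $t$ is trivial, so $\til\tau B_j = -B_j + A_j$. The remaining range $j \le k-1$ requires some care: \refL{8.2}($1^\circ$) states $\ovl{B_j} = B_j$ as a vector equation, so $\til\tau B_j = -tB_j$, and to conclude $\til\tau B_j = -B_j$ one also needs $tB_j = B_j$. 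This compatibility follows by combining the two intermediate equalities $\ovl{B_j} = B_j$ and $\ovl{B_j} = tB_{t(j)} = tB_j$ that both appear in the proof of \refL{8.2}($1^\circ$).

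Once parts $1^\circ$--$2^\circ$ are established, part $3^\circ$ is immediate: the images $\til\tau A_j$ and $\til\tau B_j$ all lie in the period lattice $L = \Z\langle A_1,\ldots,A_g,B_1,\ldots,B_g\rangle \subset \C^g$, hence $\til\tau$ descends to a well-defined map on $\C^g/L = Jac(\Sigma)$; and by \refE{tau_R} together with \refE{tau_coord} the descended map is $z \mapsto -t\ovl{z} = \tau(z)$, which is exactly the commutativity of the diagram. The main obstacle, beyond careful bookkeeping of the topological type cases, is the separating subcase $j \le k-1$ noted above, where one must distinguish the action of $t$ permuting indices from the action of $t$ permuting vector components in order to pass from the vector identity $\ovl{B_j} = B_j$ to the required formula $\til\tau B_j = -B_j$.
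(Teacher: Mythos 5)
Your proposal is correct and follows essentially the same route as the paper: compute $\til{\tau}A_j=-t\ovl{A_j}=A_{t(j)}$, compute $\til{\tau}B_j=-t\ovl{B_j}$ via the relation $\ovl{B_j}=tB_{t(j)}$ (resp.\ $\ovl{B_j}=B_j-A_j$ in the non-separating range $j\ge k$) from the proof of \refL{8.2}, and deduce $3^\circ$ from lattice invariance together with \refE{tau_coord} and \refE{tau_R}. Your extra care in the case $j\le k-1$ is exactly how the paper implicitly resolves it, namely by using $\ovl{B_j}=tB_{t(j)}$ with $t(j)=j$ so that $-t\ovl{B_j}=-t^2B_j=-B_j$.
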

\begin{proof}
$1^\circ$. By \refE{tau_R} we have
\begin{align*}\label{E:tauR}
  & \til{\tau}(A_j)=-t\ovl{A_j} = -t(-A_j) = tA_j = A_{t(j)}\ \text{in all cases}; \\
  & \til{\tau}(B_j)=-t\ovl{B_j} = -t(tB_{t(j)}) = -B_{t(j)}\ \text{except for}\ \varepsilon=0, j=k,\ldots,g;\\
& \til{\tau}(B_j)=-\ovl{B_j}\ \text{(by \refE{tau_R_non})} = -B_j+A_j\ \text{for}\ \varepsilon=0, j=k,\ldots,g.
\end{align*}
In the last two lines we made use of \refL{8.2}.

Claim $2^\circ$ follows from $1^\circ$ by definition of $t$.

By claim $1^\circ$ it follows that the fundamental lattice of the Jacobian is $\til{\tau}$-invariant. Then \refE{tau_coord} and \refE{tau_R} prove $3^\circ$.
\end{proof}
\begin{remark}
  $-\til{\tau}$ coincides with the $\tau_\R$ introduced in \cite{Nat1} except for the case $2^\circ$, $\varepsilon=0$ of \refL{til_tau}.
\end{remark}

%%%%%%%%%%%%%%%%%%%%%%%%%%%%%%%%%%%%%%%%%%%%%%
\subsection{Riemann $\theta$-function of a real curve}
The Riemann theta function is defined by
\begin{equation}\label{E:thetaRiem}
  \theta(z,B)=\sum_{N\in \Z^g}\exp(\frac{1}{2}N^TBN+N^Tz).
\end{equation}
\begin{lemma}[\cite{Dubr_ItNa}]\label{L:symm}
The Riemann theta function of a real curve possesses the following symmetries:
\begin{itemize}
  \item[$1^\circ$]
$\theta(t\ovl{z})=\ovl{\theta(z)}$ for separating curves;
  \item[$2^\circ$]
$\ovl{\theta(z)}=\theta(\ovl{z}+\l)$ for non-separating curves, where
\begin{equation}\label{E:lambda}
\l= \pi \i (0,\ldots,0,1,\ldots,1)^T \ \ (\text{units for}\ j\ge k).
\end{equation}
\end{itemize}
\end{lemma}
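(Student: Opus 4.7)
The plan is to apply complex conjugation termwise to the defining series \refE{thetaRiem}, rewrite $\ovl B$ using the symmetries already packaged in \refL{8.2}, and then recognize the resulting sum as $\theta$ evaluated at the claimed argument. Since $N\in\Z^g$ is real, one has
\[
   \ovl{\theta(z,B)}=\sum_{N\in\Z^g}\exp\bigl(\tfrac12 N^T\ovl{B}\,N+N^T\ovl{z}\bigr),
\]
so the task in each topological case reduces to understanding $\ovl B$.

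For case $1^\circ$ (separating curves) I would assemble the columnwise relations of \refL{8.2} (together with the identity $\ovl{B_j}=tB_{t(j)}$ established inside its proof) into the single matrix identity $\ovl{B}=tBt$, viewing $t$ as the permutation matrix $(tv)_i=v_{t(i)}$, which satisfies $t^2=\mathrm{id}$ and $t^T=t$. Substituting into the sum and performing the change of summation variable $N\mapsto tN$ (a bijection of $\Z^g$) makes the two copies of $t$ in the quadratic form cancel, while the linear term becomes $N^T(t\ovl z)$; the series is then exactly $\theta(t\ovl z,B)$, giving $1^\circ$.

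For case $2^\circ$ (non-separating curves) the permutation $t$ is trivial, but \refL{8.2} contributes an affine correction $\ovl{B_j}=B_j-A_j$ for $j\ge k$, which together with the normalization $A_j=2\pi\i\,e_j$ yields $\ovl{B}=B-2\pi\i\,D$ with $D=\mathrm{diag}(0,\ldots,0,1,\ldots,1)$ (units in positions $j\ge k$). The conjugated exponent thus acquires an extra quadratic term $-\pi\i\sum_{j\ge k}N_j^2$. The one nonroutine step — and the only place I expect any difficulty — is the parity identity $-N_j^2\equiv N_j\pmod 2$ (since $N_j(N_j+1)$ is always even): combined with the $2\pi\i$-periodicity of $\exp$, it replaces $-\pi\i\sum_{j\ge k}N_j^2$ by $\pi\i\sum_{j\ge k}N_j=N^T\l$ with $\l$ as in \refE{lambda}. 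Absorbing this linear term into $\ovl z+\l$ collapses the series to $\theta(\ovl z+\l,B)$, which is $2^\circ$. Everything else is bookkeeping.
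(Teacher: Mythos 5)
Your proposal is correct and follows essentially the same route as the paper: both proofs hinge on packaging \refL{8.2} as $\ovl{B}=tBt$ (separating case) and $\ovl{B}=B-\widetilde{A}$ with $\widetilde{A}=2\,\mathrm{diag}(\l)$ (non-separating case), then performing the substitution $N\mapsto tN$ resp.\ invoking the parity of $n(n+1)$ to absorb the correction modulo $2\pi\i\Z$. The only difference is the direction of the computation (you conjugate the series and simplify, the paper expands the right-hand side and matches it to $\ovl{\theta(z)}$), which is immaterial.
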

\begin{proof}
$1^\circ$. \cite[proof of Prop. 6.1]{Fay}. By \refE{tau_R_non} (and for the reason that $\theta$ is even in $z$) $\theta(tz) = \sum_{N\in\Z^g}\exp(\frac{1}{2}N^TBN+N^T(t\ovl{z}))$. Since $t^2=1$ and $t^T=t$ we have $N^TBN=(tN)^T(tBt)(tN)$. By \refL{8.2} $\ovl{B}=tBt$ (see the proof of \refL{8.2}). By setting $M=tN$ we get $\theta(\til{\tau}z)=\sum_{M\in\Z^g}\exp(\frac{1}{2}M^T\ovl{B}M+M^T\ovl{z}) = \ovl{\theta(z)}$.

$2^\circ$. $\theta(\ovl{z}+\l)=\sum_{N\in \Z^g}\exp(\frac{1}{2}N^TBN+N^T\l+N^T\ovl{z})$. We want to show that
\begin{equation}\label{E:L8.2_2}
  \frac{1}{2}N^TBN+N^T\l\equiv \frac{1}{2}N^T(B-\widetilde{A})N ({\rm mod}\, 2\pi{\rm i}\Z),\quad \forall N\in\Z^g
\end{equation}
where $\widetilde{A}=(0,\ldots,0,A_k,\ldots,A_g)$, $A_j=2\pi{\rm i}(\d_j^i)^{i=1,\ldots,g}$, $j=1,\ldots,g$ are columns of the matrix of $a$-periods. According to \refL{8.2},$2^\circ$  $B-\widetilde{A}=\ovl{B}$, which proves the required symmetry.

Relation \refE{L8.2_2} obviously descends
to the following relation:
\begin{equation}\label{E:L8.2_2''}
    N^T\l +\frac{1}{2}N^T\til{A}N\equiv 0 ({\rm mod}\, 2\pi{\rm i}\Z) ,\quad \forall N\in\Z^g.
\end{equation}
Observe that $\til{A}=2diag(\l)$. Hence $\frac{1}{2}N^T\til{A}N=\sum_{k=1}^{g}\l_kn_k^2$ where $N=\sum_{k=1}^{g}n_ke_k$, and $N^T\l=\sum_{k=1}^{g}\l_kn_k$. Then we have
\begin{equation}\label{E:L8.2_2'''}
  N^T\l+\frac{1}{2}N^T\til{A}N = \sum_{k=1}^{g}\l_k n_k(n_k+1) \in 2\pi {\rm i}\Z
\end{equation}
because $\l_k\in \pi{\rm i}\Z$, and $n_k(n_k+1)\in 2\Z$ for every $k=1,\ldots,g$.
\end{proof}
\begin{remark}
In \cite{Dubr_ItNa} claim $1^\circ$ of the Lemma is given in the form which in our notation reads as $\theta(\til{\tau} z)=\ovl{\theta(z)}$. It immediately follows from our form of the claim due to relation \refE{tau_coord} and to the fact that $\theta$ is even in $z$.
\end{remark}
\begin{remark}\label{R:sym2}
In \cite{Dubr_ItNa}, $\l$ is given without any factor $\pi{\rm i}$, like in \cite{Dubr_Nat}, by mistake, because the normalizations of basis differentials (the expressions for $\theta$, resp.) are different in \cite{Dubr_Nat} and \cite{Dubr_ItNa}.
\end{remark}

%%%%%%%%%%%%%%%%%%%%%%%%%%%%%%%%%%%%%%%%%%
%%%%%%%%%%%%%%%%%%%%%%%%%%%%%%%%%%%%%%%%%%%%%%%
\section{Real curves with involutions, their Prym varieties, and $\theta$-functions}\label{S:ogr}

By a real curve with an involution we mean a compact nonsingular algebraic curve $\Sigma$ over $\C$ endowed with two commuting involutions $\s$ and $\tau$ where the first is holomorphic, while the second is antiholomorphic. We will refer to $\tau$ as to antiinvolution also. Let $g=g(\Sigma)$ denote the genus of $\Sigma$, $g_\s=g(\Sigma_\s)$ where $\Sigma_\s=\Sigma/\s$.
%%%%%%%%%%%%%%%%%%%%%%%%%%%%%%%%%%%%%%
\subsection{Prym matrix for a curve with an involution}
%%%%%%%%%%%%%%%%%%%%%%%%%%%%%%%%%%%%%%
We relax the requirement of realness for the curves considered here. Let the degree of the branching divisor of the natural covering $\Sigma\to\Sigma/\s$ be equal to $2n$. According to \cite{Fay}, there exists a $\s$-invariant base of cycles $a_i,b_i$ ($i=1,\ldots, g_\s$), $a_i,b_i$ ($i=g_\s+1,\ldots, h=g_\s+n-1$), $a_{i+h},b_{i+h}$ ($i=1,\ldots, g_\s$) on $\Sigma$, where the first and the third groups of cycles are pulled back from $\Sigma_\s$, such that the permutation of indices induced by the action of $\s$ on the elements of the base has the form $s=(1,h+1)\ldots(g_\s,h+g_\s)(g_\s+1)\ldots(h)$, and
\begin{equation}\label{E:cycles}
    \s(a_j)+a_{s(j)}= \s(b_j)+b_{s(j)}=0,\ j=1,\ldots g.
\end{equation}

Let $\{ w_i|  i=1,\ldots, g \}$ be the dual base of normalized holomorphic differentials on $\Sigma$, $w=(w_1,\ldots,w_g)^T$. Then
\begin{equation}\label{E:sigma*}
  \s^*w=-sw
\end{equation}
where $s$ written to the left of the column of differentials denotes the matrix of the linear transformation permuting coordinates according to the permutation $s$.
Indeed, $\int_{a_j}\s^*w=\int_{\s(a_j)}w$ by change of variables, $\int_{\s(a_j)}w=\int_{-a_{s(j)}}w = -\int_{a_{s(j)}}w$ by \refE{cycles}, and $\int_{a_{s(j)}}w =\int_{a_j}sw$ by symmetry of the matrix of $a$-periods.

Relations \refE{sigma*} can be also written in the form
\begin{equation}\label{E:sigma*'}
\s^*w_i=
\left\{
  \begin{array}{ll}
    -w_{i+h}, & \hbox{$i=1,\ldots g_\s$;} \\
    -w_i, & \hbox{$i=g_\s+1,\ldots,h$;}   \\
    -w_{i-h}, & \hbox{$i=h+1,\ldots,g$,}
  \end{array}
\right.
\end{equation}
Differentials $\{\w_i=w_i+w_{i+h} | i =1,\ldots g_\s\}$ and $\{ \w_i=w_i|i=g_\s+1,\ldots,h\}$ form a base of Prym differentials on $\Sigma$. This base is normalized in a sense that $\oint_{a_j}\w_k=2\pi i\d_{kj}$, $k,j=1,\ldots,h$.
We define the Prym matrix $\Pi=(\Pi_{ij})_{i,j=1,\ldots,h}$ as follows:
\begin{equation}\label{E:Prym_matr}
 \Pi_{ij}=\oint_{b_j}\w_i\ (j=1,\ldots,g_\s);\quad
 \Pi_{ij}=\frac{1}{2}\oint_{b_j}\w_i\ (j=g_\s+1,\ldots,h).
\end{equation}
Following Fay \cite{Fay} we often use the letters $\a,\b$ to denote indices $1,\ldots,g_\s$, and $i,j$ to denote $g_\s+1,\ldots,h$, and set $\a'=\a+h,\b'=\b+h$.
With this notation
{\renewcommand{\arraystretch}{2}
%\extrarowheight4pt
\begin{equation}\label{E:Prym_matr_Fay}
\Pi =
\left(\begin{array}{c|c}
        \Pi_{\a\b}  &  \Pi_{\a j}    \\
        \hline
        \Pi_{i\b} &  \Pi_{ij}   \\
        \end{array}\right) =
\left(\begin{array}{c|c}
        \int_{b_\b}\w_\a &  \frac{1}{2}\int_{b_j}\w_\a    \\
        \hline
        \int_{b_\b}\w_i &  \frac{1}{2}\int_{b_j}\w_i   \\
        \end{array}\right)
\end{equation}}
(cf. \cite[Eq. (92)]{Fay}). The expression of the Prym matrix via the Riemann matrix is given by
\begin{lemma}\label{L:Pi_via_B}
\begin{itemize}
\item[$1^\circ$.]
   $\Pi_{\a\b} = B_{\a\b} + B_{\a'\b} = \Pi_{\b\a} $;
\item[$2^\circ$.]
   $\Pi_{\a j} = \frac{1}{2}(B_{\a j}+B_{\a' j}) = \Pi_{j\a}$;
\item[$3^\circ$.]
   $\Pi_{ij} = \frac{1}{2}B_{ij} = \Pi_{ji} $.
\end{itemize}
\end{lemma}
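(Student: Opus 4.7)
The plan is to verify each of the three identities by direct substitution of the Prym differential expressions $\w_\a = w_\a + w_{\a'}$ (for $\a=1,\ldots,g_\s$) and $\w_i = w_i$ (for $i = g_\s+1,\ldots,h$) into the defining formula \refE{Prym_matr} for $\Pi$, and then to obtain the symmetry $\Pi_{ij}=\Pi_{ji}$ from symmetry of the Riemann matrix $B$ together with two auxiliary identities relating entries of $B$ across $\sigma$-paired indices.

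In concrete terms: for $1^\circ$, since $\b \le g_\s$, there is no factor of $1/2$ in the definition of $\Pi_{\a\b}$, so $\Pi_{\a\b}=\oint_{b_\b}(w_\a+w_{\a'})=B_{\a\b}+B_{\a'\b}$. For $2^\circ$, the asymmetry in the two expressions comes from the $1/2$-convention: $\Pi_{\a j}$ gets the factor (since $j>g_\s$) while $\Pi_{j\a}$ does not, and $\w_j=w_j$ has no $\s$-partner, giving $\Pi_{j\a}=\oint_{b_\a}w_j=B_{j\a}$ versus $\Pi_{\a j}=\tfrac{1}{2}(B_{\a j}+B_{\a' j})$. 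Case $3^\circ$ is immediate from $\w_i=w_i$ and $\w_j=w_j$.

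The step requiring actual work is checking the claimed equalities $\Pi_{\a\b}=\Pi_{\b\a}$ and $\Pi_{\a j}=\Pi_{j\a}$. I would derive the following two auxiliary relations from \refE{F_not}: for any $\a,\b\le g_\s$,
\[
   B_{\a'\b}=\int_{b_\b}w_{\a'}=-\int_{b_\b}\s^*w_\a=-\int_{\s(b_\b)}w_\a=\int_{b_{\b'}}w_\a=B_{\a\b'},
\]
and similarly, since $s(j)=j$ for $j=g_\s+1,\ldots,h$, one obtains $B_{\a'j}=B_{\a j}$. The first identity, combined with the symmetry $B_{\a\b}=B_{\b\a}$, immediately yields $\Pi_{\a\b}=B_{\a\b}+B_{\a'\b}=B_{\b\a}+B_{\b\a'}=B_{\b\a}+B_{\b'\a}=\Pi_{\b\a}$. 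The second identity reduces $\Pi_{\a j}=\tfrac{1}{2}(B_{\a j}+B_{\a' j})$ to $B_{\a j}=B_{j\a}=\Pi_{j\a}$. Case $3^\circ$ needs only $B_{ij}=B_{ji}$.

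The only real obstacle is bookkeeping: one must keep clear track of when the $1/2$-convention is in force (i.e.\ which index lies in $\{g_\s+1,\ldots,h\}$), and one must apply the $\s$-covariance of $w$ and of the $b$-cycles carefully so that signs from $\s^*w_\a=-w_{\a'}$ and $\s(b_\b)=-b_{\b'}$ cancel correctly. No deeper fact is needed beyond the normalization \refE{norm}, the $\s$-behavior \refE{F_not}, and the standard symmetry of the Riemann matrix $B$.
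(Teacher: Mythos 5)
Your proposal is correct and follows essentially the same route as the paper's proof: unpack the definition \refE{Prym_matr} with the $\tfrac{1}{2}$-convention governed by the column index, then use symmetry of $B$ together with the change-of-variables identities coming from \refE{F_not} (your $B_{\a'\b}=B_{\a\b'}$ is the paper's $B_{\b\a'}=B_{\b'\a}$ up to relabelling, and $B_{\a'j}=B_{\a j}$ is used identically in both arguments). No gaps.
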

\begin{proof}
$1^\circ$. By definition of $\Pi_{\a\b}$ and symmetry of $B$ we have $\Pi_{\a\b} = \int_{b_\b}(w_\a+w_{\a'}) = B_{\a\b}+B_{\a'\b} = B_{\b\a}+B_{\b\a'}$. Next, $B_{\b\a'}=B_{\b'\a}$. Indeed, by definition, relations \refE{cycles}--\refE{sigma*'} and change of variables  $B_{\b\a'} = \int_{b_{\a'}}w_{\b} = -\int_{\s(b_\a)}w_\b = -\int_{b_\a}\s^*w_\b = \int_{b_\a}w_{\b'} = B_{\b'\a}$. Finally $\Pi_{\a\b} = B_{\b\a}+B_{\b'\a} = \Pi_{\b\a}$.

$2^\circ$. By definition $\Pi_{\a j} = \frac{1}{2}(B_{\a j}+B_{\a' j})$, $\Pi_{j\a}=B_{j\a}$. But $B_{\a j} = \int_{b_j}w_\a = \int_{-b_j}\s^*w_\a = \int_{b_j}w_{\a'}=B_{\a' j}$. Hence $\Pi_{\a j} = B_{\a j} = B_{j\a} = \Pi_{j\a}$.

$3^\circ$. The proof of the claim is similar to the proof two above ones.
\end{proof}

%%%%%%%%%%%%%%%%%%%%%%%%%%%%%%%%%%%%%%%%%%
\subsection{Symmetries of the Prym matrix for a real curve with an involution}\label{SS:Prym_rsym}
As above (\refSS{realJac}), let $t$ denote the permutation of indices induced by $\tau$. Since $\s$ and $\tau$ commute, there exists a real base of cycles satifying conditions \refE{cycles}.
\begin{lemma}\label{L:sym_prym}
For separating real curves with an involution, the matrix $\Pi$ satisfies to the following equivalent conditions:
\begin{itemize}
  \item[$1^\circ$.]
     $\ovl{\Pi_{pq}}=\Pi_{t(p),t(q)}$, $p,q=1,\ldots,h$;
  \item[$2^\circ$.]
     $\ovl{\Pi}=t\Pi t$ where, in abuse of notation, the matrix $t$ is given by the permutation $t$: $t_{pq}=\d_{p,t(q)}$;
  \item[$3^\circ$.]
     $\ovl{\Pi_q}=t\Pi_{t(q)}$ where $\Pi_q$ is the $q$-th column of $\Pi$.
\end{itemize}
\end{lemma}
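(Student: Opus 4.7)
The plan is to establish the equivalence of $1^\circ$, $2^\circ$, $3^\circ$ first by pure linear algebra, and then to prove $1^\circ$ by combining \refL{Pi_via_B} (which expresses the entries of $\Pi$ as linear combinations of entries of the Riemann matrix $B$) with \refL{8.2} (which gives the complex-conjugation symmetry of $B$ for a separating real curve).

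For the equivalences, the matrix $t$ of the permutation $t$ satisfies $t^2=1$, $t^T=t$, so for any $h\times h$ matrix $M$ one computes $(tMt)_{pq}=\sum_{r,s}\d_{p,t(r)}M_{rs}\d_{s,t(q)}=M_{t(p),t(q)}$. This shows $1^\circ \Leftrightarrow 2^\circ$. The identity $\overline{\Pi}=t\Pi t$ read column by column gives $\overline{\Pi_q}=(t\Pi t)e_q=t\Pi e_{t(q)}=t\Pi_{t(q)}$, establishing $2^\circ \Leftrightarrow 3^\circ$.

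The main work is $1^\circ$. For separating curves, the entrywise content of \refL{8.2} is $\overline{B_{ij}}=B_{t(i),t(j)}$ (the symmetry of $B$ together with the real base condition makes this the clean statement unifying cases $1^\circ$ and $3^\circ$ of that lemma). Since $\s$ and $\tau$ commute, the induced permutations $s$ and $t$ on $\{1,\dots,g\}$ commute as well; in particular, applied to an index of the form $\a'=s(\a)$ we get $t(\a')=t(s(\a))=s(t(\a))=(t\a)'$, so that $t$ preserves the block structure given by the decomposition $\{1,\dots,g\}=\{1,\dots,g_\s\}\sqcup\{g_\s+1,\dots,h\}\sqcup\{h+1,\dots,g\}$ and restricts to a permutation of $\{1,\dots,h\}$. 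Combined with the formulas of \refL{Pi_via_B}, one then verifies $\overline{\Pi_{pq}}=\Pi_{t(p),t(q)}$ in each of the three cases $(p,q)=(\a,\b)$, $(\a,j)$, $(i,j)$. For example, in the first case
\[
\overline{\Pi_{\a\b}}=\overline{B_{\a\b}}+\overline{B_{\a'\b}}=B_{t(\a),t(\b)}+B_{t(\a'),t(\b)}=B_{t(\a),t(\b)}+B_{(t\a)',t(\b)}=\Pi_{t(\a),t(\b)},
\]
and the other two cases are entirely analogous, with the factors $\tfrac12$ appearing symmetrically on both sides.

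The only real subtlety — and the step to double-check carefully — is the compatibility of the two permutations, namely $ts=st$ and the consequence $t(\a')=(t\a)'$; once this is in hand everything reduces to straightforward substitution. The computations are routine case-checks, and no new analytic input beyond \refL{8.2} and \refL{Pi_via_B} is needed.
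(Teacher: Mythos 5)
Your proposal is correct and follows essentially the same route as the paper: it summarizes \refL{8.2} for separating curves as $\ovl{B_{pq}}=B_{t(p),t(q)}$, checks $1^\circ$ block by block using the expressions of the $\Pi$-entries in terms of $B$ together with the key identity $t(\a')=(t\a)'$ coming from the commutativity of $\s$ and $\tau$, and treats $2^\circ$ and $3^\circ$ as linear-algebra reformulations of $1^\circ$ (which the paper simply calls obvious). Your explicit remark that $t$ must preserve the block decomposition of the indices is a welcome clarification of a point the paper leaves implicit, but it is not a different argument.
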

\begin{proof}
For separating curves, \refL{8.2} can be summarized as follows: $\ovl{B_{pq}}=B_{t(p),t(q)}$, $p,q=1,\ldots,g$. Making use of that we prove the claim $1^\circ$ of the present lemma for every block of the matrix \refE{Prym_matr_Fay}, separately.

Indeed, by \refE{Prym_matr_Fay} $\Pi_{\a\b}=B_{\a\b}+B_{\a'\b}$, hence $\ovl{\Pi_{\a\b}}=\ovl{B_{\a\b}}+\ovl{B_{\a'\b}} = B_{t(\a),t(\b)}+B_{t(\a'),t(\b)} = B_{t(\a),t(\b)}+B_{t(\a)',t(\b)} = \Pi_{t(\a),t(\b)}$ ($t(\a')=t(\a)'$ by commutativity of $\s$ and $\tau$). Similarly
\begin{align*}
  & \ovl{\Pi_{i\b}}= \ovl{B_{i\b}} = B_{t(i),t(\b)} = \Pi_{t(i),t(\b)}, \\
  & \ovl{\Pi_{\a j}}= {\textstyle\frac{1}{2}}\ovl{B_{\a j}} = {\textstyle\frac{1}{2}}B_{t(\a), t(j)} = \Pi_{t(\a), t(j)},\\
  & \ovl{\Pi_{i j}}= {\textstyle\frac{1}{2}}\ovl{B_{i j}} = {\textstyle\frac{1}{2}}B_{t(i), t(j)} = \Pi_{t(i), t(j)}.
\end{align*}
This completes the proof of the claim $1^\circ$. The claims $2^\circ$ and $3^\circ$ are obviously equivalent to the claim $1^\circ$.
\end{proof}
Now assume the curve to be non-separating. Assume $a_1,\ldots,a_{r_0}$ and $a_{g_\s+1},\ldots,a_{g_\s+r_1}$ ($r_0\le g_\s$, $r_1\le n-1$) are ovals, and the other $a$-cycles are not.
\begin{lemma}\label{L:8.2_Prym}
Let $\{ a_j,b_j\}$ be a $\s$-invariant real base of cycles of a non-separating real curve, then
\begin{itemize}
\item[$1^\circ$.]
$\overline{\Pi_\b}=\Pi_\b$ for $\b=1,\ldots,r_0$ (i.e. for $\b$  corresponding to pairs of ovals permutable by $\s$);
\item[$2^\circ$.]
$\overline{\Pi_j}=\Pi_j$ for $j=g_\s+1,\ldots,g_\s+r_1$ (i.e. for $j$ corresponding to $\s$-invariant ovals);
\item[$3^\circ$.]
$\overline{\Pi_\b}=\Pi_\b-A_\b$ for  $\b= r_0+1,\ldots,g_\s$ (i.e. for non-ovals permutable by $\s$);
\item[$4^\circ$.]
$\overline{\Pi_j}=\Pi_j-\frac{1}{2}A_j$ for $j= g_\s+r_1+1,\ldots,h$ (i.e. for $\s$-invariant non-ovals).
\end{itemize}
\end{lemma}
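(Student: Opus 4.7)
The strategy is to reduce everything to \refL{8.2} via the block decomposition of \refL{Pi_via_B}. Since $t$ is trivial for non-separating curves, the content of \refL{8.2} is a dichotomy: $\overline{B_q}=B_q$ if $\tau(b_q)=-b_q$, and $\overline{B_q}=B_q-A_q$ if $\tau(b_q)=-b_q+a_q$, where $A_q=2\pi{\rm i}\,e_q$ in $\C^g$. I would therefore write each Prym column $\Pi_q$ via \refL{Pi_via_B}, substitute these conjugation rules into each block, and read off the result.

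A preliminary observation allows the two blocks $B_\beta$ and $B_{\beta'}$ appearing in $\Pi_{\alpha\beta}=B_{\alpha\beta}+B_{\alpha'\beta}$ to be treated uniformly: $b_\beta$ and $b_{\beta'}=-\sigma(b_\beta)$ have the same oval type under $\tau$. Indeed, from $\tau\sigma=\sigma\tau$ and \refE{cycles} one gets $\tau(b_{\beta'})=-\sigma\tau(b_\beta)$, which translates $\tau(b_\beta)=-b_\beta$ into $\tau(b_{\beta'})=-b_{\beta'}$ and $\tau(b_\beta)=-b_\beta+a_\beta$ into $\tau(b_{\beta'})=-b_{\beta'}+a_{\beta'}$. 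Hence $\overline{B_\beta}$ and $\overline{B_{\beta'}}$ obey the same rule, and similarly one needs no separate check for $B_j$ with $j$ in the middle block since there $\sigma(a_j)=-a_j$ already fixes a single cycle.

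Cases $1^\circ$ and $2^\circ$ are then immediate: every constituent $B$-block is invariant under conjugation, so the Prym column is too. For $3^\circ$ and $4^\circ$ I would substitute $\overline{B_q}=B_q-A_q$ into the blocks and track which coordinates of the single-entry column $A_q$ fall inside the Prym index range $\{1,\ldots,h\}$. The bookkeeping is that $\alpha'=\alpha+h>h$ always, so $B_{\alpha'\bullet}$ never picks up a correction; in case $3^\circ$ also $i>g_\sigma\geq\beta$ so the $B_{i\beta}$-block is unaffected, leaving a single $-2\pi{\rm i}$ at position $(\beta,\beta)$, i.e. $-A_\beta$ as a Prym column; in case $4^\circ$ one has $\alpha\leq g_\sigma<j$ so the $\alpha$-block is unaffected, and the only correction $-\pi{\rm i}$ at position $(j,j)$ -- halved by the $\frac{1}{2}$ already built into \refL{Pi_via_B} -- reproduces $-\frac{1}{2}A_j$ in Prym normalization. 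The only obstacle is this $\delta$-symbol bookkeeping -- confirming that the $\sigma$-induced doubling in cases $1^\circ,3^\circ$ produces no spurious off-diagonal corrections and that the $\frac{1}{2}$-normalization comes out correctly; no analytic input beyond \refL{Pi_via_B} and \refL{8.2} is required.
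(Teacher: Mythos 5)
Your proof is correct and follows essentially the same route as the paper's: write each Prym column in terms of $B$-blocks via \refL{Pi_via_B}, apply the dichotomy of \refL{8.2} (rightly restated in terms of whether $\tau(b_q)=-b_q$ or $-b_q+a_q$, since here the ovals are not consecutively indexed), and track where the $\delta$-supported correction $A_q$ lands after folding. The only superfluous step is your ``preliminary observation'': both summands $B_{\alpha\beta}$ and $B_{\alpha'\beta}$ are entries of the single column $B_\beta$ (at rows $\alpha$ and $\alpha'$), so the conjugation rule for column $\beta$ already covers them uniformly and the $\tau$-type of $b_{\beta'}$ is never needed.
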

\begin{proof}
$1^\circ$. For $\Pi_\b$, $\b\le r_0\le g_\s$ we have
\[
 \begin{array}{ll}
   \Pi_{\a\b} = B_{\a\b}+B_{\a'\b}& \hbox{by \refL{Pi_via_B};} \\
   \Pi_{i\b} = B_{i\b} & \hbox{by \refE{Prym_matr_Fay}.}
 \end{array}
\]
Since number $\b$ corresponds to an oval, we obtain by \refL{8.2} that $B_{\a\b}=\ovl{B_{\a\b}}$, $B_{\a\b}=\ovl{B_{\a'\b}}$, $B_{i\b}=\ovl{B_{i\b}}$. Hence $\ovl{\Pi_\b}=\Pi_\b$.

$2^\circ$. For $\Pi_j$, $g_\s+1\le j\le g_\s+r_1$ the argument is the same, except for the coefficient $\frac{1}{2}$ in \refL{Pi_via_B}.$2^\circ$, which does not disturb.

$3^\circ$. For $\a\le g_\s$ it follows from $\Pi_{\a\b}= B_{\a\b}+B_{\a'\b}$  and \refL{8.2},$2^\circ$. Observe that $A_{\a'\b}=2\pi{\rm i\d_{\a'\b}}=0$ since $\a\le g_\s$, $\a' > h\ge g_\s$. Hence $\Pi_{\a\b}-A_{\a\b}= B_{\a\b}-A_{\a\b} + B_{\a'\b}- A_{\a'\b}$. According to \refL{8.2},$2^\circ$ the last is equal to $\ovl{B_{\a\b}} + \ovl{B_{\a'\b}} = \ovl{\Pi_{\a\b}}$.

For $i> g_\s$, $\Pi_{i\b}=B_{i\b}$ (like in the point $1^\circ$), hence $\Pi_{i\b}-A_{i\b} = B_{i\b} - A_{i\b} = \ovl{B_{i\b}} = \ovl{\Pi_{i\b}}$.

$4^\circ$. For $\a\le g_\s$ the claim follows from $\Pi_{\a j}=\frac{1}{2}(B_{\a j}+B_{\a' j})$ and \refL{8.2}.$2^\circ$. In this case $A_{\a j}=A_{\a' j}=0$ since $\a\le g_\s<j\le h<\a'$. Hence $\Pi_{\a j}-\frac{1}{2}A_{\a j}= \frac{1}{2}(B_{\a j}-A_{\a j} + B_{\a' j}- A_{\a' j})$. According to \refL{8.2},$2^\circ$ the last is equal to $\frac{1}{2}(\ovl{B_{\a j}}+\ovl{B_{\a' j}}) = \ovl{\Pi_{\a j}}$. In fact, we have obtained $\ovl{\Pi_{\a j}} = \Pi_{\a j}$ in this case.

For $i> g_\s$ the claim follows from $\Pi_{ij}=\frac{1}{2}B_{ij}$ (\refL{Pi_via_B}.$3^\circ$). Indeed, $\Pi_{ij}-\frac{1}{2}A_{ij}= \frac{1}{2}(B_{ij} - A_{ij}) =\frac{1}{2}\ovl{B_{ij}} = \ovl{\Pi_{ij}}$.
\end{proof}
\begin{corollary}
All entries of $\Pi$ are real numbers except for diagonal entries $\Pi_{jj}$ with  $j= r_0+1,\ldots,g_\s$ and $j= g_\s+r_1+1,\ldots,h$ (for Jacobians, except for the entries of  the right lower corner).
\end{corollary}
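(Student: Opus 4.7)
The plan is to unpack the four column-wise identities of \refL{8.2_Prym} entry by entry. The key observation is that each $a$-period vector $A_q = 2\pi\i\, e_q$ has a single nonzero component, located at position $q$ and equal to $2\pi\i$. Consequently a vectorial relation of the form $\overline{\Pi_q} = \Pi_q - c\,A_q$ with $c\in\{0,\tfrac12,1\}$ is equivalent to the componentwise statement: $\overline{\Pi_{pq}} = \Pi_{pq}$ for every $p \neq q$, while $\overline{\Pi_{qq}} = \Pi_{qq} - 2\pi\i c$, which forces $\Im\,\Pi_{qq} = \pi c$.

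First I would verify that the four cases $1^\circ$--$4^\circ$ of \refL{8.2_Prym} exhaust all column indices $q = 1,\ldots,h$: cases $1^\circ$ and $3^\circ$ together cover $\beta = 1,\ldots,g_\s$, while cases $2^\circ$ and $4^\circ$ together cover $j = g_\s+1,\ldots,h$. Applying the observation above, cases $1^\circ$ and $2^\circ$ (coefficient $c=0$) yield entirely real columns, so every entry $\Pi_{pq}$ with $q\le r_0$ or $g_\s<q\le g_\s+r_1$ is real. Cases $3^\circ$ (coefficient $c=1$) and $4^\circ$ (coefficient $c=\tfrac12$) yield columns whose off-diagonal entries are again real, but whose diagonal entry has nonzero imaginary part, equal to $\pi$ and $\pi/2$ respectively.

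Combining, the only non-real entries of $\Pi$ are precisely the diagonal entries $\Pi_{jj}$ with $j$ in either of the two index ranges listed in the statement, and nothing else. The symmetry of $\Pi$ supplied by \refL{Pi_via_B} guarantees that the row-wise picture agrees with the column-wise one, so no separate argument for rows is required. I do not foresee any substantive obstacle: the corollary is a direct bookkeeping consequence of \refL{8.2_Prym} together with the explicit shape $A_q = 2\pi\i\, e_q$ of the $a$-period vectors, and no additional input is needed.
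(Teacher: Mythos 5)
Your proposal is correct and follows exactly the route the paper intends: the corollary is stated without proof as an immediate consequence of \refL{8.2_Prym}, and your componentwise unpacking of $\overline{\Pi_q}=\Pi_q-cA_q$ using $A_q=2\pi\i\,e_q$ (giving real off-diagonal entries and $\Im\Pi_{qq}=\pi c$) is precisely the bookkeeping being left to the reader.
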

%%%%%%%%%%%%%%%%%%%%%%%%%%%%%%%%%%%%%%

%%%%%%%%%%%%%%%%%%%%%%%%%%%%%%%%%%%%%%%%%%%%%%
\subsection{Prym theta function, isoPrym variety and the Abel--Prym map}\label{SS:Ab_Pr}
%%%%%%%%%%%%%%%%%%%%%%%%%%%%%%%%%%%%%%%%%%%%%%
A Prym theta function is a Riemann $\theta$ function with the Riemann matrix $\Pi$:
\begin{equation}\label{E:theta_Prym}
  \theta(z,\Pi)=\sum_{N\in \Z^h}\exp(\frac{1}{2}N^T\Pi N+N^Tz),\quad z\in\C^h.
\end{equation}
It corresponds to the principally polarized Abelian variety $P_0=\C^{h}/\Z({2\pi i}E,\Pi)$ which is a finite unramified covering of the Prym variety (thus isogeneous to the Prymian). Below, $P_0$ is referred to as isoPrymian.  The map $\A : \Sigma\to P_0$:
\begin{equation}\label{E:AP_map}
  \A(\ga)= \left(\int_{\ga_0}^{\ga}\w\right)\,({\rm mod}\,\Z({2\pi i}E,\Pi))
\end{equation}
where $\w=(\w_1,\ldots,\w_{h})^T$ (equivalently, $\w=(\w_\a,\w_j)^T$) is reffered to as the Abel--Prym map.
\begin{lemma}\label{L:P_symm}
The Prym $\theta$ function of a real curve possesses the following symmetries:
\begin{itemize}
  \item[$1^\circ$]
$\ovl{\theta(z)}=\theta(t\ovl{z})$ for separating curves;
  \item[$2^\circ$]
For non-separating curves, let $\l=(\l_1,\ldots,\l_h)^T$ where
\begin{equation}\label{E:lambda_Prym}
\l_k=
\left\{
              \begin{array}{ll}
                \pi{\rm i}, & \hbox{$k=r_0+1,\ldots,g_\s$;} \\
     \frac{1}{2}\pi{\rm i}, & \hbox{$k=g_\s+r_1+1,\ldots,h$;}\\
                         0  & \hbox{otherwise.}
              \end{array}
            \right.
\end{equation}
If all $\s$-invariant basis cycles are ovals (i.e. $g_\s+r_1=h$) then
\[
   \ovl{\theta(z)}=\theta(\ovl{z}+\l).
\]
\end{itemize}
\end{lemma}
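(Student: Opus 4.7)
The plan is to mimic the proof of \refL{symm}, substituting the Prym theta function \refE{theta_Prym} for the Riemann theta function and the structural symmetries of $\Pi$ obtained in \refSS{Prym_rsym} for those of the Riemann matrix $B$ used there. The argument splits along the same separating/non-separating dichotomy as in \refL{symm}.

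For $1^\circ$, I would start from the definition and write
\[
\theta(t\bar z) = \sum_{N\in\Z^h}\exp(\tfrac{1}{2}N^T\Pi N + N^T(t\bar z)).
\]
Since $t^{2}=1$ and $t^{T}=t$, we have $N^T\Pi N = (tN)^T(t\Pi t)(tN)$ and $N^T(t\bar z) = (tN)^T\bar z$. By \refL{sym_prym}, $2^\circ$, $t\Pi t = \bar\Pi$, so the substitution $M = tN$ (a bijection of $\Z^h$) recasts the right-hand side as $\sum_{M}\exp(\tfrac{1}{2}M^T\bar\Pi M + M^T\bar z) = \overline{\theta(z)}$. This is essentially a verbatim repetition of the argument in the proof of \refL{symm}, $1^\circ$.

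For $2^\circ$ I would proceed along the lines of the proof of \refL{symm}, $2^\circ$. Expanding
\[
\theta(\bar z+\lambda) = \sum_{N\in\Z^h}\exp(\tfrac{1}{2}N^T\Pi N + N^T\lambda + N^T\bar z)
\]
and comparing it with $\overline{\theta(z)}$ reduces the claim to
\[
\tfrac{1}{2}N^T(\Pi - \bar\Pi)N + N^T\lambda \equiv 0 \pmod{2\pi\mathrm{i}\,\Z}, \quad \forall N\in\Z^h.
\]
By \refL{8.2_Prym} each column of $\Pi-\bar\Pi$ is a scalar multiple $c_k A_k$ of the corresponding $a$-period, hence $\Pi-\bar\Pi = 2\pi\mathrm{i}\,\mathrm{diag}(c_1,\ldots,c_h)$ with $c_k\in\{0,1,\tfrac{1}{2}\}$ according to the four cases there. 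Comparing with \refE{lambda_Prym} one reads off the uniform formula $\lambda_k = \pi\mathrm{i}\,c_k$, and the congruence reduces to
\[
\pi\mathrm{i}\sum_{k=1}^{h}c_k\,n_k(n_k+1) \equiv 0 \pmod{2\pi\mathrm{i}\,\Z}.
\]

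The hard part, and the reason for the hypothesis $g_\s+r_1=h$ in the statement, lies precisely at this last congruence. For $c_k\in\{0,1\}$ the expression $c_k\,n_k(n_k+1)$ is automatically an even integer and the congruence holds termwise, exactly as in \refE{L8.2_2'''}. A coefficient $c_k=\tfrac{1}{2}$, however, which would come from \refL{8.2_Prym}, $4^\circ$, produces a term $\pi\mathrm{i}\cdot\tfrac{1}{2}n_k(n_k+1)$ which is only guaranteed to be an integer multiple of $\pi\mathrm{i}$, not of $2\pi\mathrm{i}$. The assumption that all $\s$-invariant basis cycles are ovals eliminates exactly this case, so that only $c_k\in\{0,1\}$ occur and the argument goes through. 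No analogous restriction is needed in \refL{symm}, $2^\circ$ because for the Jacobian the columns of $B-\bar B$ are integer (rather than half-integer) multiples of full $a$-periods.
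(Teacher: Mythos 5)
Your proposal is correct and follows essentially the same route as the paper: claim $1^\circ$ by the substitution $M=tN$ together with $\ovl{\Pi}=t\Pi t$ from \refL{sym_prym}, and claim $2^\circ$ by reducing to the congruence $\tfrac12 N^T(\Pi-\ovl{\Pi})N+N^T\l\equiv 0\ (\mathrm{mod}\ 2\pi\mathrm{i}\Z)$ via \refL{8.2_Prym}, with the hypothesis $g_\s+r_1=h$ eliminating exactly the half-integer diagonal entries coming from $\s$-invariant non-ovals. Your closing observation explaining why no such restriction arises for the Jacobian is a correct gloss consistent with the paper's argument.
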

\begin{proof}
$1^\circ$. Due to \refL{sym_prym},$2^\circ$ the proof of the claim $1^\circ$ of the present lemma is the same as the proof of \refL{symm},$1^\circ$.

$2^\circ$.
Similar to \refL{symm}, we can summarize \refL{8.2_Prym} as $\ovl{\Pi}=\Pi-\til{A}$ where $\til{A}=2diag(\l)$. Like in \refL{symm}, the desired symmetry reduces to the relation
\begin{equation}\label{E:sym_r}
    N^T\l +\frac{1}{2}N^T\til{A}N\equiv 0 ({\rm mod}\, 2\pi{\rm i}\Z) ,\quad \forall N\in\Z^g,
\end{equation}
and
\begin{equation}\label{E:sym_r}
  N^T\l+\frac{1}{2}N^T\til{A}N = \sum_{k=1}^{g}\l_k n_k(n_k+1).
\end{equation}
However, it follows from \refE{lambda_Prym} that for $k=g_\s+r_1+1,\ldots,h$ (i.e. for $\s$-invariant non-ovals) we only can claim that $\l_k n_k(n_k+1)\in \pi{\rm i}\Z$ while for the remainder of values of $k$ we have $\l_k n_k(n_k+1)\in 2\pi{\rm i}\Z$. Hence in absence of such non-ovals the desired symmetry takes place.
\end{proof}
%%%%%%%%%%%%%%%%%%%%%%%%%%%%%%%%%%%%%%%%%%%%%%%%%%%%%%%%%%%%

%%%%%%%%%%%%%%%%%%%%%%%%%%%%%%%%%%%%%%%%%%%%%%%%%%%%%

%%%%%%%%%%%%%%%%%%%%%%%%%%%%%%%%%%%%%%%%%%%%%%%%%%%%%
%%%%%%%%%%%%%%%%%%%%%%%%%%%%%%%%%%%%%%%%%%%%%%%%%%%%%
\section{The Abel--Prym map, and inversion theorem}\label{S:inv_th}
%%%%%%%%%%%%%%%%%%%%%%%%%%%%%%%%%%%%%%%%%%%%%%%%%%
%%%%%%%%%%%%%%%%%%%%%%%%%%%%%%%%%%%%%%%%%

\subsection{Riemann vanishing theorem}
Assume $\Sigma$ to be endowed with a holomoprhic involution~$\s$. Let $A$ stay for the Abel map, and $A'$ for the Abel--Prym map.
Let $F_{z}(P)=\theta(\int_{q}^{P}\w-z)=\theta(A'(P)-z)$ ($P\in\Sigma$, $z\in \C^h$)  where $\theta$ is the Prym $\theta$-function. The $F_{z}(P)$ is defined up to a $\theta$-multiplier depending on the integration path from $q$ to~$P$, hence the zero divisor of $F_{z}(P)$ is well defined. We denote it by $\zeta$.  We introduce the matrix $\eps=diag(\eps_1,\ldots,\eps_h)$ where $\eps_\a=1$ for $\a=1,\ldots,g_\s$, $\eps_j=2$ for $j=g_\s+1,\ldots,h$. Following \cite{Fay} we also introduce the embedding $\phi:\, \C^h\to \C^g$ by setting $\phi(z_1,\ldots,z_{g_\s},z_{g_\s+1},\ldots,z_h)\to (z_1,\ldots,z_{g_\s},2z_{g_\s+1},\ldots,2z_h,z_1,\ldots,z_{g_\s})$.  Let $\sum_{i=1}^{2n}Q_i$ be the branch divisor of the natural covering $\pi:\,\Sigma\to\Sigma_\s$,  $K_\s$ be the canonical divisor on~$\Sigma_\s$.
\begin{lemma}\label{L:inver1}
Either $F_{z}(P)\equiv 0$ or
\begin{itemize}
\item[$1^\circ$]
$\deg\zeta=2h$;
\item[$2^\circ$]
$A(\zeta)= \phi(z)+ \Delta$, $\Delta=\sum_{i=1}^{2n}A(Q_i)+\pi^*(\frac{1}{2}K_\s)+q-\s q$;
\item[$3^\circ$]
$\pi_*\zeta$ is the divisor of zeroes of a differential on $\Sigma_\s$ with at most simple poles at $\pi_*(\sum_{i=1}^{2n}Q_i)$: $\pi_*\zeta\sim \sum_{i=1}^{2n}\pi_*(Q_i)+K_\s$.
\item[$4^\circ$]
$A'(\zeta)=\eps z$.
\end{itemize}
\end{lemma}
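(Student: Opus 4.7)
The plan is to apply the classical residue / argument-principle technique on a fundamental polygon of $\Sigma$, adapted to the Prym setting using the Fay cycle basis from \refS{ogr}. For claim $1^\circ$, compute $\deg\zeta=\frac{1}{2\pi\i}\oint_{\partial\tilde{\Sigma}}d\log F_z$, where $\tilde{\Sigma}$ denotes $\Sigma$ cut along the base of cycles. The quasi-periodicity $\theta(w+2\pi\i e_j,\Pi)=\theta(w)$ and $\theta(w+\Pi_j,\Pi)=\exp(-\frac{1}{2}\Pi_{jj}-w_j)\theta(w)$ dictates the multipliers produced as we cross each side of the polygon. The crucial observation is that the $b$-period of $\omega$ along $b_j$ equals a single column $\Pi_j$ for $j\le g_\s$ and likewise along $b_{j+h}$ in the third group, while it equals $2\Pi_j$ for the middle cycles $g_\s<j\le h$, since the factor $\frac{1}{2}$ in the definition of $\Pi_{ij}$ for $j>g_\s$ (\refL{Pi_via_B}) must be undone. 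Hence each outer pair contributes $1$ and each middle pair contributes $2$ to the boundary integral, giving $\deg\zeta=g_\s+2(h-g_\s)+g_\s=2h$.

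For claim $2^\circ$, apply the same residue argument to $\frac{1}{2\pi\i}\oint_{\partial\tilde{\Sigma}}\omega_k\log F_z$ for each $k=1,\ldots,h$. By residues this equals the $k$-th component of $A(\zeta)$ modulo the lattice $\Z(2\pi\i E,\Pi)$, while evaluation using the theta-multipliers yields $\eps_k z_k$, where $\eps_k=1$ for $k\le g_\s$ and $\eps_k=2$ for $g_\s<k\le h$, reflecting the same $\frac{1}{2}$-factor that appeared in the degree count. The additive constant $\Delta$ is the Prym analog of the vector of Riemann constants; its explicit value $\sum_{i=1}^{2n}A(Q_i)+\pi^*(\frac{1}{2}K_\s)$ is obtained by evaluating at a convenient base point, with the branch divisor $\sum Q_i$ supplying the $\s$-fixed contribution and $\pi^*(\frac{1}{2}K_\s)$ recording the intrinsic Riemann constant pulled back from $\Sigma_\s$.

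For claim $3^\circ$, exploit $\s$-equivariance: by \refE{sigma*}, $\s^*\omega=-s\omega$, so $F_z$ transforms controllably under $\s$ and the combined divisor $\zeta+\s\zeta$ lies in a distinguished $\s$-invariant linear equivalence class. Pushing down via $\pi$ and counting degrees, $\deg\pi_*\zeta=2h=2g_\s+2n-2=\deg(K_\s+\sum_{i=1}^{2n}\pi(Q_i))$; tracking the equivalence through $\pi_*$ then identifies $\pi_*\zeta$ as the zero divisor of a meromorphic differential on $\Sigma_\s$ with at most simple poles at $\pi(Q_i)$. The main obstacle throughout is the bookkeeping in claim $2^\circ$: identifying $\Delta$ precisely requires balancing the $\frac{1}{2}$-factors in $\Pi$ against the normalization of $\omega$, Riemann--Hurwitz ($g=2g_\s+n-1$), and the branch-divisor contribution to the Riemann constants. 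This delicate accounting is exactly what is carried out in Sections 3--5 of \cite{Fay}, whose Corollary 5.6 and Eq.~(108) compile the final formulas.
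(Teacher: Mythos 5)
The paper does not actually prove \refL{inver1}: it presents the statement as a transcription of Corollary 5.6 and Eq.\ (108) of \cite{Fay} into the paper's notation, and refers the reader to \cite{Sh_PrymJacobi} for an analytic proof. Your proposal therefore supplies an argument where the paper supplies a citation, and the argument you sketch is the standard one (generalized argument principle on the cut surface, which is essentially what Fay does). The structural points you isolate are the right ones and are correctly computed: the $b_j$- and $b_{j+h}$-periods of the Prym vector $\w$ equal the single column $\Pi_j$ for $j\le g_\s$ while the middle $b_j$-periods equal $2\Pi_j$, which is exactly what forces $\deg\zeta=g_\s+2(h-g_\s)+g_\s=2h$ and produces the diagonal matrix $\eps$ in claim $2^\circ$; the degree balance $2h=2g_\s+2n-2=\deg(K_\s+\sum\pi_*(Q_i))$ behind claim $3^\circ$ is also correct, as is the Riemann--Hurwitz count $g=2g_\s+n-1$. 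What your sketch does not actually establish is the explicit value of the constant $\Delta=\sum_{i=1}^{2n}A(Q_i)+\pi^*(\frac{1}{2}K_\s)$ (and its dependence on the base point being $\s$-invariant), nor the precise $\s$-equivariance step identifying $\pi_*\zeta$ with the zero divisor of a differential with simple poles at the branch points; for both you defer to Sections 3--5 of \cite{Fay}. That is a genuine gap if the goal were a self-contained proof, but it leaves you no worse off than the paper itself, which outsources the entire lemma to the same source. In short: correct outline, same underlying method as the cited proof, with the hardest bookkeeping (the Prym--Riemann constant) still borrowed rather than derived.
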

Claims $1^\circ$--$3^\circ$ are nothing but Corollary 5.6 \cite{Fay} combined with Eq.(108) \cite{Fay}. Claim $4^\circ$ follows from Lemma 2.5 \cite{Sh_PrymJacobi} which states that $A'(\zeta)=\eps z+\Delta'$ where $\Delta'\in P_0$ does not depend of $z$. It is completely similar to the claim $2^\circ$ of the present lemma. However, observe that $A'(\s\zeta)=-A'(\zeta)$ for any $\zeta$, and $\s\zeta$ is obviously the divisor of zeroes of $\theta(A'(\s P)-z)$. We obtain $\theta(A'(\s P)-z) = \theta(-A'(P)-z) = \theta(A'(P)+z)$ since $\theta$ is an even function. Hence $\s\zeta$ is the zero divisor of the function $\theta(A'(P)+z)$, and by Lemma~2.5 \cite{Sh_PrymJacobi} $A'(\s\zeta) = -\eps z+\Delta'$ which implies $A'(\zeta)+A'(\s\zeta)=2\Delta'$. On the other hand,  $A'(\zeta)+A'(\s\zeta)=0$ by skew-symmetry of $A'$, hence $\Delta'=0$.
\begin{remark}
Skew-symmetry of $A'(P)$ with respect to $\s$ is obvious if $q$ is $\s$-invariant. If there is no $\s$-invariant points on $\Sigma$ ($\pi$ is non-ramified) we can slightly modify the definition of $A'$ according to \cite[section 4.2]{BorSh}, or to \cite[section 7.2.1]{Taim}. The proof of Lemma 2.5 \cite{Sh_PrymJacobi} remains valid.
\end{remark}
\begin{corollary}\label{C:Nov-Ves}
\begin{equation}\label{E:Nov-Ves}
          A(\zeta)+A(\s\zeta)=2\Delta.
\end{equation}
\end{corollary}
Relation \refE{Nov-Ves} is referred to as Veselov--Novikov condition \cite{VN}.
\begin{proof}
Let $z\in \C^h$ and $\zeta$ is the divisor of the function $\theta(A'(P)-z)$. Then, as above, $\s\zeta$ is the divisor of $\theta(A'(P)+z)$, hence $A(\s\zeta)=\phi(-z)+\Delta$ by \refL{inver1}.$2^\circ$. It follows that  $A(\zeta)+A(\s\zeta)= \phi(z)+ \phi(-z)+ 2\Delta$. But $\phi$ is obviously skew-symmetric by definition.
\end{proof}%%%%%%%%%%%%%%%%%%%%%%%%%%%%%%%%%%%%%%%%%%%%%%%
\subsection{$\theta$-function formula for symmetric functions of zeroes of $F_{\eps^{-1}z}(P)$} \label{S:theta-form}

In general, any effective procedure of finding out the divisor $\zeta$ is unknown. However, there exists an effective (avoiding any direct solution of the equation $\theta(A(P)-\eps^{-1}z)=0$) procedure of calculating symmetric functions of the points of the divisor $\zeta$. Here we present it following the lines of \cite{Sh_PrymJacobi}, with modifications due to the difference between the particular case considered there, and the general case studied here. In turn, the approach in \cite{Sh_PrymJacobi} developes the one proposed in \cite{Dubr_theta}, and finally goes back to Riemann.

Let $z\in isoPrym(\Sigma)$, $\A^{-1}(z)=\zeta$, and $|\zeta| = \{ P_1,\ldots,P_{2h} \}$ is the support of~$\zeta$. Symmetric functions of  $P_1,\ldots,P_{2h}$ are well-defined functions of $z$. It is our goal to find out a theta function formulae for a certain set of such functions.

For any meromorphic function $f$ on $\Sigma$ let $\s_f(z)=\sum_{P\in\zeta} f(P)$. Below, we assume that $\Sigma$ is a branch covering of the Riemann sphere, and $f$ has no pole except those over infinity. Let $\pi:\Sigma\to\mathbb{P}^1$ be the covering map. Then we have the following relation close to the relation due to Dubrovin (\cite[Eq. (11.23)]{Dubr_RegCh}, \cite[Eq. (2.4.29)]{Dubr_theta})  (see the proof in \cite{Sh_PrymJacobi}):
\begin{equation}\label{E:Dubr}
  \s_f(z)=c-\sum_{q\in\pi^{-1}(\infty)}\res_q fd\ln F_{\eps^{-1}z},
\end{equation}
where $c$ is constant in $z$.

Let $x_i=\pi(P_i)$, $i=1,\ldots,2h$. We take $f(P)=x^k$ where $x=\pi(P)$. Denote $\s_f$ by $\s_k$, then
\begin{equation}\label{E:sigm}
  \s_k(z)=x_1^k+\ldots +x_{2h}^k,
\end{equation}
i.e. $\s_k(z)$ is the $k$th Newton polynomial in $x_1,\ldots ,x_{2h}$.
Exactly as in \cite{Sh_PrymJacobi} we obtain
\begin{equation}\label{E:sigma_k_result}
    \s_k(z)=c-\sum_{q\in\pi^{-1}(\infty)}\sum_{i=1}^{h}\sum_{1\le |j|\le 2k-1} \varkappa_{qik}^jD^j\partial_i\ln\theta(\A(q)-\epsilon^{-1}z)
\end{equation}
where $j=(j_1,\ldots,j_{h_1})$, $|j|=j_1+\ldots+j_{h_1}$,
\begin{equation}\label{E:theta_kappa}
D^j=\frac{1}{j_1!\ldots j_h!}\frac{\partial^{|j|}}{\partial z_1^{j_1}\ldots\partial z_{h}^{j_{h}}},
  \quad {\varkappa_{qik}^j}=\sum_{l_{qi}+\sum_{s=1}^{h}\sum_{p=1}^{j_s} l_{qsp}=2k-1} \varphi_{qi}^{(l_{qi})}\prod_{s=1}^{h}\prod_{p=1}^{j_s}\frac{\varphi_{qs}^{(l_{qsp}-1)}}{l_{qsp}},
\end{equation}
$l_{qs}$ and $\varphi_{qs}^{(l_{qs})}$ are defined from the relation $\A_s(P)=\A_s(q)+\sum_{l_{qs}\ge 1}\frac{\varphi_{qs}^{(l_{qs})}}{l_{qs}}z_q^{l_{qs}}$, $P=P(z_q)$, $z_q$ is a local parameter at the point $q$.

The functions $\s_k(z)$, $k=1,\ldots,2h$ give a full set of symmetric functions of $x$-coordinates of the points in $A^{-1}(z)$. They determine $x_1,\ldots,x_{2h}$ up to a permutation. With this relation, observe that, for hyperellyptic curves, there exists an alternative to the Rieman vanishing theorem \cite{BEL}. Namely, the $x_i$'s can be calculated as zeroes of an algebraic equation with coefficients given by means of the Weierstra{\ss} $\wp$-functions on the curve in that case. The following statement shows that knowledge of $\s_k(z)$, $k=1,\ldots,2h$ gives the solution of the Jacobi inversion problem of the same degree of effectiveness in the case of an arbitrary curve with involution.
\begin{theorem}\label{T:qusi_baker}
 Calculating $\s_k(z)$, $k=1,\ldots,2h$ reduces the problem of finding out the divisor $\zeta$ to resolving a degree $2h$ algebraic equation.
\end{theorem}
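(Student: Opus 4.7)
The proof plan rests on the classical observation that the Newton power sums $\sigma_1,\ldots,\sigma_{2h}$ of the variables $x_1,\ldots,x_{2h}$ determine their elementary symmetric functions, which in turn are the coefficients of the single polynomial whose roots are the $x_i$'s.

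First, I would fix $z\in \mathrm{isoPrym}(\Sigma)$ and use formula \refE{sigma_k_result} to compute the theta-function expressions for $\sigma_1(z),\ldots,\sigma_{2h}(z)$ as explicit functions of $z$. By \refE{sigm}, these are the Newton power sums in the $x_i=\pi(P_i)$, where $\pi:\Sigma\to\mathbb{P}^1$ is the branched covering, and $\{P_1,\ldots,P_{2h}\}=|\zeta|$ is the support of the divisor returned by \refL{inver1}.

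Next, I would apply Newton's identities
\[
 k e_k = \sum_{i=1}^{k}(-1)^{i-1} e_{k-i}\,\sigma_i,\qquad e_0=1,
\]
recursively for $k=1,\ldots,2h$ to obtain the elementary symmetric polynomials $e_1,\ldots,e_{2h}$ of $x_1,\ldots,x_{2h}$ as explicit polynomial expressions in $\sigma_1,\ldots,\sigma_{2h}$. Then the $x_i$'s are precisely the roots of
\[
 P(x) \;=\; \prod_{i=1}^{2h}(x-x_i) \;=\; \sum_{k=0}^{2h}(-1)^k e_k\, x^{2h-k},
\]
which is a polynomial equation of degree $2h$ whose coefficients have been determined by the $\sigma_k(z)$. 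Solving this equation recovers the unordered $2h$-tuple $\{x_1,\ldots,x_{2h}\}$, which is exactly the content of the theorem.

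The only subtle point — not, properly speaking, part of the statement being proved, but worth flagging — is that the $x_i$'s only recover the projected divisor $\pi_*\zeta$. Lifting each $x_i$ to the correct sheet of $\pi^{-1}(x_i)\subset\Sigma$ requires one additional computation of $\sigma_f(z)$ for a function $f$ on $\Sigma$ that separates the sheets (for instance, by taking $f=y\,x^k$ in the hyperelliptic case, or more generally a function in the relevant meromorphic function field). This step is not a genuine obstacle; after solving $P(x)=0$ one simply substitutes each root into the rational lift, and the remainder of the divisor is read off. Thus the main and only real work in the proof is the Newton-identity bookkeeping, and the theorem is essentially a formal consequence of the closed expression \refE{sigma_k_result} together with the fact that $2h$ Newton sums determine a polynomial of degree $2h$ up to its set of roots.
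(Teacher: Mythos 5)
Your proposal is correct and follows essentially the same route as the paper: pass from the Newton power sums $\sigma_k(z)$, $k=1,\ldots,2h$, to the elementary symmetric functions via Newton's identities, and recover $x_1,\ldots,x_{2h}$ as the roots of the resulting degree $2h$ polynomial. Your additional remark about lifting the roots to the correct sheets of $\pi$ is a sensible observation, but it does not change the argument, which matches the paper's.
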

Indeed, we pass from the Newtone polynomials to elementary symmetric functions in \refE{sigm}, and then find out $x_1,\ldots,x_{2h}$ as the roots of the corresponding algebraic equation.
%%%%%%%%%%%%%%%%%%%%%%%%%%%%%%%%%%%%%%%%%%%%%%

%%%%%%%%%%%%%%%%%%%%%%%%%%%%%%%%%%%%%%%%%%%%%%%%%%
\subsection{Inversion theorem for real curves}

It is a purpose of this section to show that for curves possessing a real structure $\tau$, the inverse image of certain real subvarieties of the isoPrymian (actually being shifts of varieties of fixed points of symmetries of the Prym theta function) can be given a more effective description as the variety of $\tau$-invariant (or $\s\tau$-invariant) degree $2h$ divisors (automatically satisfying the Veselov--Novikov condition by \refC{Nov-Ves}). Observe that both $\tau$ and $\s\tau$ are antiholomorphic involutions on $\Sigma$, and both of them are involved into the description.
The following statement completes the inversion theorem for real curves.
\begin{theorem}\label{T:invers}
Assume, $F_{\eps^{-1}z}(P)$ is not identically zero. Then for separating curves
\begin{itemize}
  \item[$1^\circ$.] if $z+t\ovl{z}= 0$ then $\zeta=A^{-1}(z)$ is $\tau$-invariant;
  \item[$2^\circ$.] if $z-t\ovl{z}= 0$ then $\zeta=A^{-1}(z)$ is $\s\tau$-invariant;
\end{itemize}
and for non-separating curves
\begin{itemize}
  \item[$3^\circ$.] if $z-\ovl{z}= -\eps\l$ then $\zeta=A^{-1}(z)$ is $\s\tau$-invariant.
\end{itemize}
\end{theorem}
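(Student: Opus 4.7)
The plan is to compute $F_{\eps^{-1}(z-\Delta)}(\iota P)$ for $\iota\in\{\tau,\s\tau\}$, use \refL{P_symm} to rewrite the result as $\overline{\theta\bigl(A(P)+c(z)\bigr)}$ for an explicit shift $c(z)$, and then exploit the fact that the zero divisors on $\Sigma$ of $\theta\bigl(A(\cdot)+c_1\bigr)$ and $\theta\bigl(A(\cdot)+c_2\bigr)$ coincide as soon as $c_1-c_2\in\Z(2\pi{\rm i}E,\Pi)$, i.e.\ the period lattice of $isoPrym(\Sigma)$.

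\textbf{Step 1.} Extend \refL{tau^*} to the normalized Prym basis $\w=(\w_\a,\w_j)^T$: from $\tau^*w_i=-\overline{w_{t(i)}}$ together with $t(\a')=t(\a)'$ (which is commutativity of $s$ and $t$, itself a consequence of $\s\tau=\tau\s$) one obtains $\tau^*\w=-t\overline{\w}$ for separating curves and $\tau^*\w=-\overline{\w}$ for non-separating curves. Combined with $\s^*\w=-\w$ (built into the definition of Prym differentials) and with a base point $q$ chosen to be simultaneously $\s$- and $\tau$-invariant, change of variables along the integration path yields
\[
A(\tau P)=-t\overline{A(P)},\quad A(\s\tau P)=t\overline{A(P)}\qquad\text{(separating)},
\]
\[
A(\tau P)=-\overline{A(P)},\quad A(\s\tau P)=\overline{A(P)}\qquad\text{(non-separating)}.
\]

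\textbf{Step 2.} Substituting these into $F_{\eps^{-1}(z-\Delta)}(\iota P)$ and using evenness of $\theta$, in the separating case we bring the argument into the form $t\overline{v}$ with $v=A(P)\pm\eps^{-1}t(\bar z-\bar\Delta)$; this uses commutativity of $\eps$ and $t$, which holds because $t$ (commuting with $s$) preserves the block decomposition $\{1,\ldots,g_\s\}\sqcup\{g_\s+1,\ldots,h\}$ on which $\eps$ is constant. Applying \refL{P_symm},\,$1^\circ$ in the form $\theta(t\overline{v})=\overline{\theta(v)}$ yields
\begin{align*}
F_{\eps^{-1}(z-\Delta)}(\tau P) &= \overline{\theta\bigl(A(P)+\eps^{-1}t(\bar z-\bar\Delta)\bigr)},\\
F_{\eps^{-1}(z-\Delta)}(\s\tau P) &= \overline{\theta\bigl(A(P)-\eps^{-1}t(\bar z-\bar\Delta)\bigr)}.
\end{align*}
In the non-separating $\s\tau$-case, conjugating \refL{P_symm},\,$2^\circ$ and using $\overline{\l}=-\l$ (since $\l\in\pi{\rm i}\Z^h$ under the standing hypothesis $g_\s+r_1=h$) gives the identity $\theta(v)=\overline{\theta(\overline{v}+\l)}$; applied to $v=\overline{A(P)}-\eps^{-1}(z-\Delta)$ this produces
\[
F_{\eps^{-1}(z-\Delta)}(\s\tau P) = \overline{\theta\bigl(A(P)-\eps^{-1}(\bar z-\bar\Delta)+\l\bigr)}.
\]

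\textbf{Step 3.} Complex conjugation does not change the zero set of a holomorphic function of $P$, so $\iota\zeta=\zeta$ follows as soon as the shift inside $\theta$ in Step 2 differs from $-\eps^{-1}(z-\Delta)$ by an element of the period lattice; equality of those shifts is a clearly sufficient condition. Multiplying through by $\eps$ and rearranging yields respectively $(z-\Delta)+t(\bar z-\bar\Delta)=0$, $(z-\Delta)-t(\bar z-\bar\Delta)=0$ and $(z-\bar z)-(\Delta-\bar\Delta)+\eps\l=0$, which are exactly the conditions $1^\circ,2^\circ,3^\circ$ of \refT{invers}.

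\textbf{Main obstacle.} The delicate point is case $3^\circ$, which inherits from \refL{P_symm},\,$2^\circ$ the hypothesis ``all $\s$-invariant basis cycles are ovals'' ($g_\s+r_1=h$); without this the $\tfrac{1}{2}\pi{\rm i}$-components of $\l$ would spoil the congruence $N^T\l+\tfrac{1}{2}N^T\widetilde{A}N\equiv 0\pmod{2\pi{\rm i}\Z}$ and the symmetry itself would fail. Under the hypothesis one has $\overline{\l}=-\l$, which is exactly what makes the conjugation step in Step 2 work cleanly. A secondary bookkeeping issue is to ensure the base point is both $\s$- and $\tau$-invariant so that Step 1 has no extra constant shift, and that the formula for $\Delta$ in \refL{inver1} is compatible with this choice.
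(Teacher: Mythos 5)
Your proposal is correct and follows essentially the same route as the paper's own proof: compute $A(\tau P)$ and $A(\s\tau P)$ from \refL{tau^*} and the $\s$-antiinvariance of $\w$, apply the symmetries of the Prym $\theta$-function from \refL{P_symm}, and read off the condition on $z$ by matching the arguments of $\theta$. Your Step 3 phrasing (coincidence of zero divisors when the shifts agree modulo the period lattice) is just a slightly more explicit version of the paper's ``up to a $\theta$-multiplier'' remark, and your observation about the hypothesis $g_\s+r_1=h$ inherited from \refL{P_symm},$2^\circ$ in case $3^\circ$ is a fair and accurate caveat.
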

\begin{proof}
By \refL{inver1} the zero divisor of $F_{\eps^{-1}z}(P)$, and $A^{-1}(z)$ are the same. Consider first the case of separation curves.

$1^\circ$. Let's find out when the zero divisor of $F_{\eps^{-1}z}(P)$ is $\tau$-invariant. It takes place if $F_{\eps^{-1}z}(\tau(P))=\ovl{F_{\eps^{-1}z}(P)}$ (up to a $\theta$-multiplier depending on the integration path).
By \refL{tau^*}.$1^\circ$ $A(\tau(P)) = -t\ovl{A(P)}$. Hence $F_{\eps^{-1}z}(\tau(P))  = \theta(-t\ovl{A(P)}-\eps^{-1}z) = \theta(t\ovl{A(P)}+\eps^{-1}z)$ (the last because $\theta$ is an even function). By \refL{P_symm},$1^\circ$
\[
    \ovl{F_{\eps^{-1}z}(P)}  = \ovl{\theta(A(P)-\eps^{-1}z)} = \theta(t(\ovl{A(P)}-\eps^{-1}\ovl{z})).
\]
If $t\ovl{A(P)}+\eps^{-1}z = t(\ovl{A(P)}-\eps^{-1}\ovl{z})$ then $F_{\eps^{-1}z}(\tau(P))  = \ovl{F_{\eps^{-1}z}(P)}$. Since $\eps$ commutes with $t$, the last implies the claim $1^\circ$ of the lemma.

$2^\circ$.   By \refL{P_symm}.$1^\circ$, and due to the fact that $\w$ in \refE{AP_map} is $\s$-antiinvariant, we have $A(\s\tau(P))=t\ovl{A(P)}$. Hence $F_{\eps^{-1}z}(\s\tau(P))  = \theta(t\ovl{A(P)}-\eps^{-1}z)$. By \refL{P_symm}.$1^\circ$
\[
    \ovl{F_{\eps^{-1}}(P)}  = \ovl{\theta(A(P)-\eps^{-1}z)} = \theta(t(\ovl{A(P)}-\eps^{-1}\ovl{z})).
\]
If $t\ovl{A(P)}-\eps^{-1}z = t(\ovl{A(P)}-\eps^{-1}\ovl{z})$ then $F_{\eps^{-1}z}(\s\tau(P))  = \ovl{F_{\eps^{-1}z}(P)}$. Since $\eps$ commutes with $t$, the last implies the claim $2^\circ$ of the lemma.

$3^\circ$. Non-separation curves.

By \refL{tau^*}.$3^\circ$, and due to the fact that $\w$ in \refE{AP_map} is $\s$-antiinvariant, $A(\s\tau(P)) = \ovl{A(P)}$. Hence $F_{\eps^{-1}z}(\s\tau(P))  = \theta(\ovl{A(P)}-\eps^{-1}z)$. By \refL{P_symm}.$2^\circ$
\[
    \ovl{F_{\eps^{-1}z}(P)}  = \ovl{\theta(A(P)-\eps^{-1}z)} = \theta(\ovl{A(P)}-\eps^{-1}\ovl{z}+\l).
\]
If $\ovl{A(P)}-\eps^{-1}z = \ovl{A(P)}-\eps^{-1}\ovl{z}+\l$ then $F_{\eps^{-1}z}(\s\tau(P))  = \ovl{F_{\eps^{-1}z}(P)}$. The last implies the claim $3^\circ$ of the lemma.
\end{proof}
\begin{remark}
For non-separating curves the case when $\zeta=A^{-1}(z)$ is $\tau$-invariant does not exist, for the reason that it descends to the relation $z +\ovl{z} = -\l$ which is contradictory because $\l$ is imaginary, non-zero.
\end{remark}

%%%%%%%%%%%%%%%%%%%%%%%%%%%%%%%%%%%%%%%%%%%%%%%%%%%%

%%%%%%%%%%%%%%%%%%%%%%%%%%%%%%%%%%%%%%%%%%%%%%%
%%%%%%%%%%%%%%%%%%%%%%%%%%%%%%%%%%%%%%%%%%%%%%%
%%%%%%%%%%%%%%%%%%%%%%%%%%%%%%%%%%%%%%%%%%%%%%%
%%%%%%%%%%%%%%%%%%%%%%%%%%%%%%%%%%%%%%%%%%%%%%%%%%%%

\bibliographystyle{amsalpha}

\end{document}
%%%%%%%%%%%%%%%%%%%%%%%%%%%%%%%%%%%%%%%%%%%%%%%%
%%   THE END
%%%%%%%%%%%%%%%%%%%%%%%%%%%%%%%%%%%%%%%%%%%